\def\NN{{\mathbb N}}
\def\RR{{\mathbb R}}
\def\CC{{\mathbb C}}
\def\P{{\mathcal P}}
\def\X{{\mathcal X}}
\def\gg{{\mathfrak g}}
\def\sign{{\mathop{sign}}}
\newcommand{\vol}{\operatorname{vol}}
\def\id{{\rm id}}
\DeclareMathOperator{\GL}{GL}
\DeclareMathOperator{\trace}{tr}
\DeclareMathOperator{\im}{Im}
\DeclareMathOperator{\re}{Re}
\DeclareMathOperator{\Sp}{Sp}
\DeclareMathOperator{\SO}{SO}
\theoremstyle{plain}
\newtheorem{thm}{Theorem}[section]
\newtheorem{lem}[thm]{Lemma}
\newtheorem{prop}[thm]{Proposition}
\newtheorem{conj}[thm]{Conjecture}
\theoremstyle{definition}
\newtheorem{exmp}[thm]{Example}
\theoremstyle{remark}
\newtheoremstyle{Acknowledgements}
  {}
    {}
     {}
     {}
    {\bfseries}
    {}
     {.5em}
     {\thmname{#1}\thmnumber{ }\thmnote{ (#3)}}
\theoremstyle{Acknowledgements}
\date{\today, \currenttime} 
\begin{document}
\title[Sturm vector valued]{Sturm's operator acting on vector valued $K$-types}

\author{Kathrin Maurischat}
\address{\rm {\bf Kathrin Maurischat}, Mathematisches Institut,
   Universit\"at Heidelberg, Im Neuenheimer Feld 205, 69120 Heidelberg, Germany }
\curraddr{}
\email{\sf maurischat@mathi.uni-heidelberg.de}

\begin{abstract}
We define Sturm's operator for vector valued Siegel modular forms obtaining an explicit description of their holomorphic projection in case of large absolute weight. However, for small absolute weight, Sturm's operator produces phantom terms in addition. This confirms our earlier results for scalar Siegel modular forms.
\end{abstract}

\maketitle
\setcounter{tocdepth}{2}
\tableofcontents

\section{Introduction}
Let $G$ be the symplectic group  of rank $m$.
Sturm's operator $\mathop{St}_\kappa$ is defined on (non-holomorphic) symplectic modular forms $f$ of weight $\kappa$ for a discrete subgroup $\Gamma\subset G$ by an integral operator on the coefficients  of the Fourier expansion $f(Z)=\sum_{T=T'}a(T,Y)e^{2\pi i\trace(TY)}$ for positive definite $T$
\begin{equation*}
 a(T,Y)\mapsto b(T)\:=\:c(\kappa)^{-1}\int_{Y>0}a(T,Y)\det(TY)^{\kappa-\frac{m+1}{2}}e^{-2\pi \trace(TY)}dY_{inv}
\end{equation*}
It is well-defined for scalar weight $\kappa>m-1$. Here $c(\kappa)$ is a constant depending only on weight and rank.
The Fourier series $\mathop{St_\kappa}(f)(Z)=\sum_{T>0} b(T)e^{2\pi i\trace(TZ)}$ allows an interpretation as holomorphic cusp form $\mathop{St_\kappa}(f)\in[\Gamma,\kappa]_0$, and indeed is the holomorphic projection $\mathop{pr}_{hol}(f)$ of $f$ in case the weight $\kappa$ is large, i.e. greater than twice the rank of the symplectic group.
This result by Sturm~\cite{sturm1}, \cite{sturm2}, and Panchishkin~\cite{panchishkin} relies on a generating system of Poincar\'e series $p_T\in[\Gamma,\kappa]_0$ for which the coefficients $b(T)$ are essentially given by the scalar product $\langle p_T,f\rangle= b(T)$.
The same result holds true for weight $\kappa=2m$ in case $m\leq 2$ (\cite{gross-zagier}, \cite{holproj}). However, in case  of weight $\kappa=3$ and rank $m=2$   we showed jointly with R.~Weissauer (\cite{phantom}) that Sturm's operator produces, along with the holomorphic projection, a second term $\mathop{ph}(f)\in [\Gamma,\kappa]_0$
\begin{equation*}
 \mathop{St}\nolimits_\kappa (f)\:=\:\mathop{pr}\nolimits_{hol}(f)+\mathop{ph}\nolimits(f)\:.
\end{equation*}
This phantom term $\mathop{ph}(f)=\mathop{St}_\kappa(\Delta^{[m]}_+(h))$ arises as the non-holomorphic Maass shift of a holomorphic  form $h\in[\Gamma,\kappa-2]$ of weight one (see section~\ref{sec-sturm} for the exact definition of $\Delta_+^{[m]}$. Later  (\cite{scalar-weight}) we generalized this result to general rank $m>2$ and $\kappa=m+1$.
However, the phenomenon of arising phantom terms in case of small weight is rather non-understood.

Therefore, here we study the case of vector valued Siegel modular forms with values in the space $V_\rho$ of an  irreducible rational representation $\rho$ of $\GL(m,\CC)$. These modular forms for example play an important role for singular weights~\cite{freitag2}.
\bigskip 

Consider the operator valued Poincar\'e series on the Siegel upper halfspace $\mathcal H$
\begin{equation}\label{def_poincare_vector_valued}
 p_T(Z)\:=\:\sum_{\gamma\in\Gamma_\infty\backslash \Gamma} \rho(J(\gamma,Z))^{-1}e^{2\pi i\trace(T\gamma\cdot Z)}\:.
\end{equation}
Here for a matrix  
$g=\left(\begin{smallmatrix}\ast&\ast\\C&D\end{smallmatrix}\right)\in G$
and $Z\in\mathcal H$ we use the $J$-factor $J(g,Z)=CZ+D$.
 We may  evaluate each single summand of these Poincar\'e series at special vectors $v\in V_\rho$ to get vector valued series. 
Candidates for $v$ are the highest weight vector $v_\rho$ or (if it exists) 
the spherical vector $v_K$.
Because of the cocycle relation $J(\tilde\gamma \gamma,Z)=J(\tilde \gamma, \gamma Z)J( \gamma,Z)$ valid for all $\gamma,\tilde \gamma \in G$,
the series $P$ has the transformation property
\begin{equation*}
 \rho(J(\tilde \gamma,Z))^{-1} p_T(\tilde \gamma Z)\:=\: p_T(Z)\:.
\end{equation*}
Assuming good convergency properties by proposition~\ref{H-translation}, $p_T(Z)v\in [\Gamma,\rho]_0$ is a vector valued holomorphic cusp form with values in $\mathop{End}(V_\rho)$.
Notice that it doesn't transform by $\mathop{Ad} \rho$, which would be more natural, but  is not compatible with its interpretation as an operator  on $V_\rho$.

For a valued non-holomorphic modular form of weight $\rho$ with Fourier expansion
\begin{equation*}
 f(Z)\:=\:\sum_{T=T'}\rho(T^{\frac{1}{2}})a(T,T^\frac{1}{2}YT^\frac{1}{2})\cdot e^{2\pi i\trace(TX)}
\end{equation*}
we define Sturm's operator by
\begin{equation*}
 \mathop{St}\nolimits_\rho(f)(Z)\:=\:\sum_{T>0}\rho(T^\frac{1}{2})b(T)e^{2\pi i\trace(TZ)}\:,
\end{equation*}
where the coefficients $b(T)$ are defined by the integral
\begin{equation*}
b(T)'\:=\: \det(T)^{-\frac{m+1}{2}}\int_{Y>0}a(T,Y)'\rho(T^\frac{1}{2})C(\rho)^{-1}\rho(Y)\rho(T^{-\frac{1}{2}}) \frac{e^{-2\pi\trace(Y)}dY_{inv}}{\det(Y)^{\frac{m+1}{2}}}\:.
\end{equation*}
Here $C(\rho)$ is an operator such that on holomorphic cuspforms $f$ Sturm's operator is the identity.
In contrast to the constant $c(\kappa)$ the scalar valued case, $C(\rho)$ must be placed carefully into the integral. In general it is known that the vector valued $\Gamma$-integrals converge in case the absolute weight of $\rho$ is large enough (\cite{godement}). But it is not clear a priori that the operators are surjective outside a discrete set of zeros and poles. Theoretically, the integrals are computable by using the Littlewood-Richardson rule once the $\Gamma$-function for all tensor powers $\mathop{st}^{\otimes n}$ of the standard representation is known. But the latter involves non-trivial combinatorics. We devote the second part of the paper to obtain some partial results.
We determine the $\Gamma$-integrals for alternating powers of the standard representation in section~\ref{sec-alt}. Further we obtain all $\Gamma$-functions for algebraic representations of $\GL(2,\CC)$ by section~\ref{sec-Gamma-rank-2}. We include some remarks on Weyl's character formula for $\Gamma$-functions in section~\ref{sec-weyl}.

We say an  irreducible representation $\rho$ of $\GL(m,\CC)$ with dominant highest weight $l=(l_1,\dots,l_n)$, where $l_1\geq l_2\geq\dots\geq l_n$, has absolute weight  $\kappa=l_n$. Like in the scalar weight case, for large absolute weight we obtain holomorphic projection by Sturm's operator:
\begin{thm}\label{theorem_hol_proj}
 Let $\rho$ be an irreducible representation of $\GL(m,\CC)$ of large absolute weight $\kappa>2m$. Assume $C(\rho)$ is an isomorphism.
 Then Sturm's operator realizes the holomorphic projection operator.
\end{thm}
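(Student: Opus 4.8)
The plan is to mimic the scalar-weight argument of Sturm--Panchishkin, using the operator-valued Poincar\'e series $p_T(Z)$ as a generating system and showing that Sturm's operator computes, coefficient by coefficient, the Petersson scalar product against these Poincar\'e series. First I would establish the unfolding identity: for a cusp form $g\in[\Gamma,\rho]_0$ with Fourier expansion $g(Z)=\sum_{S>0}\rho(S^{1/2})c(S)e^{2\pi i\trace(SZ)}$, unfold the integral $\langle p_T(\cdot)v, g\rangle$ over $\Gamma_\infty\backslash\mathcal H$ against the single term $\rho(J)^{-1}e^{2\pi i\trace(T\gamma Z)}$, reducing it to an integral over $Y>0$ of the $T$-th Fourier coefficient of $g$ paired with the appropriate $\rho(Y)$-weight factor. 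The point of the careful placement of $C(\rho)^{-1}$ inside the integral defining $b(T)$ is precisely that this unfolded integral reproduces $\langle \cdot,\cdot\rangle$-pairing with $c(T)$ up to the normalizing operator; i.e. one gets an identity of the shape $\langle p_T(\cdot)v,\,g\rangle = \langle v,\, c(T)\,(\text{something})\rangle$, and the hypothesis that $C(\rho)$ is an isomorphism guarantees this relation can be inverted so that the $c(T)$ are recovered from the scalar products. Convergence of all these integrals and of the Poincar\'e series is exactly where the absolute-weight bound $\kappa>2m$ enters, via proposition~\ref{H-translation} and the Godement-type criterion cited in the text.

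Next I would apply the same unfolded integral to the given non-holomorphic form $f$ in place of $g$. By definition $\mathop{St}_\rho(f)(Z)=\sum_{T>0}\rho(T^{1/2})b(T)e^{2\pi i\trace(TZ)}$ with $b(T)$ given by that integral, and the unfolding computation shows $b(T)$ equals (up to the $C(\rho)$-normalization, now an isomorphism) the scalar product $\langle p_T(\cdot)v, f\rangle$ for suitable $v$. Since $p_T(\cdot)v$ is a genuine holomorphic cusp form in $[\Gamma,\rho]_0$ by the convergence hypothesis, and since the Petersson product of $f$ with a holomorphic cusp form depends only on the holomorphic projection $\mathop{pr}_{hol}(f)$ --- the orthogonal complement of the holomorphic cusp forms being annihilated --- we get $b(T)=\langle p_T(\cdot)v,\,\mathop{pr}_{hol}(f)\rangle$ (again up to normalization). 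Now running the \emph{same} identity with $f$ replaced by the holomorphic cusp form $\mathop{pr}_{hol}(f)$, whose Fourier coefficients are literally its own coefficients and for which $\mathop{St}_\rho$ is the identity by the defining property of $C(\rho)$, I conclude that $b(T)$ is precisely the $T$-th coefficient of $\mathop{pr}_{hol}(f)$. Summing over $T>0$ yields $\mathop{St}_\rho(f)=\mathop{pr}_{hol}(f)$.

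Finally one must check the two pieces of bookkeeping that make the argument honest in the vector-valued setting: first, that the operator-valued Poincar\'e series $p_T(Z)$, evaluated at $v_\rho$ or $v_K$, really does span (or at least separates) the $T$-th Fourier coefficients of all of $[\Gamma,\rho]_0$ as $T$ and $v$ vary --- this is the vector-valued analogue of the scalar ``generating system'' statement and follows from the same completeness argument once one knows the Poincar\'e series reproduce coefficients; and second, that the normalizing operator $C(\rho)$ commutes past the $\rho(Y)$-factor in the way needed for the unfolding to close up, which is why the theorem explicitly assumes $C(\rho)$ is an isomorphism rather than merely nonzero. The main obstacle I expect is not the unfolding itself --- that is formally parallel to the scalar case --- but verifying that the noncommutativity of $\rho(Y)$, $\rho(T^{1/2})$ and $C(\rho)$ does not obstruct the identification $b(T)=\langle p_T v,f\rangle$; one has to track the order of the operators through the change of variables $Y\mapsto T^{1/2}YT^{1/2}$ in the integral and confirm that the placement of $C(\rho)^{-1}$ chosen in the definition of $b(T)'$ is exactly the one that makes $\mathop{St}_\rho$ act as the identity on holomorphic forms, which then forces the general identity by linearity and the spanning property.
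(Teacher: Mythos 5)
Your proposal follows essentially the same route as the paper: the proof there consists precisely of the unfolding computation of $\langle f, P_T v\rangle$ over $\Gamma_\infty\backslash\mathcal H$, the observation that $P_Tv\in[\Gamma,\rho]_0$ for $\kappa>2m$ (proposition~\ref{H-translation}) so that the pairing only sees $\mathop{pr}_{hol}(f)$, and the normalization of $C(\rho)$ so that $\mathop{St}_\rho$ is the identity on holomorphic cusp forms — the paper then simply states ``the unfolding process above proves theorem~\ref{theorem_hol_proj}.'' The bookkeeping points you flag (the placement of $C(\rho)^{-1}$ amid the noncommuting factors $\rho(T^{1/2})$, $\rho(Y)$, and the role of the isomorphism hypothesis) are exactly the caveats the paper itself emphasizes, and your reduction via $\rho(T^{-1/2})v$ ranging over $V_\rho$ renders the spanning discussion unnecessary rather than wrong.
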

Whereas, again for small absolute weight $\kappa=m+1$ this is no longer  true, as we see by the following special case.
\begin{thm}\label{thm-phantom}
 For rank $m=2$ let $\tau$ be the irreducible representation of $\GL(2,\CC)$ of highest weight $(k+1,k)$ with $k\geq 1$. Let $h\in[\Gamma,\tau]_0$ be a non-zero vector valued holomorphic cusp form of weight $\tau$. Then the image of its Maass shift $\Delta_+^{[m]}(h)$ under Sturm's operator
 \begin{equation*}
  \mathop{St}\nolimits_{\tau\otimes \det\nolimits^2}\bigl(\Delta_+^{[m]}(h)\bigr)
 \end{equation*}
is non-zero if and only if $k=1$. In particular, in case of highest weight $(4,3)$ Sturm's operator $\mathop{St}_\rho$ does not realize holomorphic projection but produces phantom terms.
\end{thm}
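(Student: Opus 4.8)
The plan is to compute $\mathop{St}_{\tau\otimes\det\nolimits^2}(\Delta_+^{[m]}(h))$ directly on Fourier expansions and to express it through the rank-two $\Gamma$-functions obtained in section~\ref{sec-Gamma-rank-2}, following the scheme used for scalar weight in \cite{phantom} and \cite{scalar-weight}. Throughout write $\rho=\tau\otimes\det\nolimits^2$: this is the irreducible representation of $\GL(2,\CC)$ of highest weight $(k+3,k+2)$, so its absolute weight is $\kappa=k+2\geq 3$, and since $\tau\cong\mathop{st}\otimes\det\nolimits^k$ and $\rho\cong\mathop{st}\otimes\det\nolimits^{k+2}$ the spaces $V_\tau$ and $V_\rho$ are two-dimensional, so that every operator occurring below is a $2\times2$ matrix. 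I would also record at the outset that $\mathop{pr}_{hol}(\Delta_+^{[m]}(h))=0$, since $\Delta_+^{[m]}$ is, up to a nonzero scalar, adjoint for the Petersson product to the Maass lowering operator that annihilates $[\Gamma,\rho]_0$; hence $\mathop{St}_\rho(\Delta_+^{[m]}(h))$ is exactly the phantom term, and the theorem asserts it is non-zero precisely when $k=1$.

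First I would make the Maass shift explicit on Fourier expansions. Writing $h(Z)=\sum_{T>0}\tau(T^{\frac12})a_h(T)\,e^{2\pi i\trace(TX)}$ with constant coefficients $a_h(T)\in V_\tau$, and using that $\Delta_+^{[m]}$ (section~\ref{sec-sturm}) raises the weight by $\det\nolimits^2$, the non-holomorphic form $\Delta_+^{[m]}(h)$ of weight $\rho$ has Fourier expansion
\begin{equation*}
 \Delta_+^{[m]}(h)(Z)\:=\:\sum_{T>0}\rho(T^{\frac12})\,q_k\bigl(T^{\frac12}YT^{\frac12}\bigr)\,a_h(T)\,e^{2\pi i\trace(TX)},
\end{equation*}
where $q_k$ is an explicit $\Hom(V_\tau,V_\rho)$-valued function, polynomial in the matrix entries of its argument and in the inverse of its determinant and carrying the holomorphic Gaussian factor; up to the common twist by $\det\nolimits^k$ its shape and degree do not depend on $k$.

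Next I would substitute $a(T,Y)=q_k(T^{\frac12}YT^{\frac12})a_h(T)$ into the integral defining the coefficients $b(T)$ of $\mathop{St}_\rho$, and use the homogeneity of $\rho$, $\det$, the trace, and of the invariant measure under $Y\mapsto T^{-\frac12}YT^{-\frac12}$; the $T$-dependence then collapses and $b(T)=C(\rho)^{-1}G_k\,a_h(T)$, where $G_k$ is a fixed operator arising as a $\Gamma$-integral of the function $Y\mapsto q_k(Y)\rho(Y)$ in the sense of the $\Gamma$-integrals of the introduction. Expanding $q_k$ into monomials writes $G_k$ as a finite $\CC$-linear combination of the $\Gamma$-functions $\Gamma(\sigma)$ of the irreducible $\GL(2,\CC)$-constituents $\sigma$ appearing in the Clebsch--Gordan decompositions formed from $q_k$ and $\rho$, each provided in closed form in section~\ref{sec-Gamma-rank-2}, while $C(\rho)$ is precisely the value making the leading term of $G_k$ (the one coming from the holomorphic-type part of $q_k$) equal to the identity. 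Assembling these closed forms, the leading contribution is cancelled by $C(\rho)^{-1}$, and what remains is a combination of ratios of rank-two $\Gamma$-values whose entries are rational functions of $k$; by the explicit formulas of section~\ref{sec-Gamma-rank-2} it is identically $0$ once $\kappa=k+2$ strictly exceeds $m+1=3$, i.e.\ for every $k\geq 2$ (for $k\geq 3$ this is consistent with Theorem~\ref{theorem_hol_proj}, which already gives $\mathop{St}_\rho=\mathop{pr}_{hol}$; the present computation also settles the borderline case $k=2$, $\kappa=2m$, not covered by that theorem). At $\kappa=m+1$, i.e.\ $k=1$, one of the $\Gamma$-values degenerates — the corresponding $\Gamma$-integral has a pole there — the cancellation fails, and the resulting operator $V_\tau\to V_\rho$ is non-zero, in fact an isomorphism. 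As $h$ is a non-zero cusp form, $a_h(T_0)\neq0$ for some $T_0>0$, so the coefficient $\rho(T_0^{\frac12})b(T_0)$ of $\mathop{St}_\rho(\Delta_+^{[m]}(h))$ is non-zero; this, together with $\mathop{pr}_{hol}(\Delta_+^{[m]}(h))=0$, gives the equivalence and, in the case $k=1$ of highest weight $(4,3)$, shows that $\mathop{St}_\rho$ produces phantom terms.

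The main obstacle will be the bookkeeping. The operator $C(\rho)$ does not commute past $\rho(Y)$, so its exact placement inside the matrix-valued integral has to be tracked carefully — this is where the single constant $c(\kappa)$ of the scalar theory becomes genuinely operator-theoretic; the Clebsch--Gordan decomposition of the lower-order terms of $q_k$ must be carried out and matched against the rank-two $\Gamma$-functions of section~\ref{sec-Gamma-rank-2}; and one has to check that at $k=1$ the residual term does not itself vanish, which hinges on the exact location of the poles of those $\Gamma$-functions. Once section~\ref{sec-Gamma-rank-2} is in hand the remaining work is a finite, elementary computation, and the convergence of $\Delta_+^{[m]}(h)$ and of all the $\Gamma$-integrals is covered by $\kappa=k+2\geq 3$ and the convergence results of~\cite{godement} quoted in the introduction.
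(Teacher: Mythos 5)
Your strategy coincides with the paper's: expand $\Delta_+^{[2]}(h)$ in a Fourier series, insert its coefficients into the integral defining $b(T)$, reduce by homogeneity to $T$-independent $\Gamma$-integrals regularized by a factor $\det(Y)^s$, and decide vanishing from the limit at $s=0$. The mechanism you name for $k=1$ is also the correct one: the term coming from the $\det(Y)^{-1}$-part of the shifted coefficient produces $(s+k-\tfrac12)\Gamma_2(s+k-\tfrac12)$, and since $\Gamma_2(s+k-\tfrac12)$ is proportional to $\Gamma(s+k-1)$ this has a pole at $s=0$ exactly when $k=1$, which is what defeats the cancellation.

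The gap is that the step you defer to ``bookkeeping'' is the entire content of the theorem, and it cannot be read off from section~\ref{sec-Gamma-rank-2} in the way you assert. One must actually compute $\Delta_+^{[2]}(e^{2\pi i\trace(TZ)})$ --- the paper does this via Freitag's product rule for $\det(\partial_Z)$ (\cite{freitag}), obtaining four terms with prefactors $(k+1)(k-\tfrac12)\det(Y)^{-1}$, $-4\pi(k-\tfrac12)\trace(TY)\det(Y)^{-1}$, $(4\pi)^2\det(T)$ and $-4\pi\det(T)(TY)^{-1}$ --- and then evaluate the four corresponding integrals by Proposition~\ref{gamma_standard} and Lemma~\ref{polynoimal-integrals} (these, rather than the $(r,0)$-weight operators of Proposition~\ref{SO(2)-Gamma-eigenvalues}, are what actually occur, since $\rho\cong\mathop{st}\otimes\det^{k+2}$ and all four terms are scalar or standard-representation valued). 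Only after assembling all four and using $(s+k-\tfrac12)\Gamma_2(s+k-\tfrac12)=(s+k-1)^{-1}\Gamma_2(s+k+\tfrac12)$ does one find
\begin{equation*}
 b(T,s)\:=\:\det(T)^{-\frac32}c(\rho)^{-1}(4\pi)^{-2(s+k)}\,\frac{s(s-\frac12)}{s+k-1}\,\Gamma_2(s+k+\tfrac12)\,a(T)\:,
\end{equation*}
whose limit at $s=0$ vanishes precisely for $k>1$ and equals $-\tfrac{(4\pi)^2}{2}\det(T)^{-\frac32}a(T)$ for $k=1$. Note that the structure is not the one you describe: the ``holomorphic-type'' term alone contributes $(4\pi)^2\neq 0$ after normalization by $C(\rho)^{-1}$, and no single term is cancelled; rather all four terms acquire the common factor $\Gamma_2(s+k+\tfrac12)$ and their polynomial prefactors sum to $s(s-\tfrac12)/(s+k-1)$. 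Without exhibiting this, the claimed identical vanishing for all $k\geq2$ --- in particular the borderline case $k=2$, i.e.\ $\kappa=2m$, which Theorem~\ref{theorem_hol_proj} does not cover --- is unsupported. You should also make the $s$-regularization explicit from the outset, since at $k=1$ the individual integrals diverge at $s=0$ and the statement only makes sense as $\lim_{s\to0}b(T,s)$.
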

Our results obtained so far are limited by the explicit computability of phantom terms. Nevertheless, by \cite{phantom}, \cite{scalar-weight}, and the above, the following interpretation is at hand.
A holomorphic cusp form of weight $\rho$ generates a holomorphic representation of the symplectic group $G$ of minimal $K$-type $\rho$. In case of absolute weight $\kappa\geq m+1$ this is a (limit of) discrete series representation.
Within the root lattice of $\mathfrak{sp}_m$ and for the consistent choice of positive roots $e_1-e_2,\dots,e_{m-1}-e_m,2e_m$, 
those belong to the cone given by the $\delta$-translate of the positive Weyl chamber. 
More precisely, a representation of minimal $K$-type of highest weight $(l_1,\dots, l_m)$ is situated by its Harish-Chandra parameter $(l_1-1,l_2-2,\dots,l_m-m)$. 
Here $\delta=(m,m-1,\dots,1)$ is half the sum of positive roots.
Whereas there are some holomorphic representations outside this cone, for example those generated by $h\in[\Gamma,1]_0$.
The wall orthogonal to all short simple roots is given by $(r-1,r-2,\dots,r-m)$ for $r\geq m+1$. Here,  \cite{scalar-weight} suggests that Sturm's operator  realizes the holomorphic projection operator as long as $r>m+1$, i.e. apart from the the apex $\delta$ of the cone belonging to the minimal $K$-type $(m+1,\dots,m+1)$.
In the case of rank two theorem~\ref{thm-phantom} shows that Sturm's operator fails on the wall of the cone perpendicular to the long root. This suggests the following expectation in general.
\begin{conj}
 Sturm's operator produces phantom terms on all the facets of the cone not perpendicular to each of the short simple roots. 
 The phantom terms arise as Maass shifts of holomorphic cusp forms of small absolute weight.
\end{conj}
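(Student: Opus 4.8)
The plan is to make Theorem~\ref{thm-phantom} completely explicit: compute $\Delta_+^{[m]}(h)$ and then its image under $\mathop{St}_{\tau\otimes\det^2}$ Fourier coefficient by Fourier coefficient, and reduce the whole dichotomy to the rank two $\Gamma$-functions determined in section~\ref{sec-Gamma-rank-2}.

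First I would write $h(Z)=\sum_{S>0}\rho_\tau(S^{1/2})c(S)\,e^{2\pi i\trace(SZ)}$ with $c(S)\in V_\tau$ and apply the weight raising operator $\Delta_+^{[m]}$ recalled in section~\ref{sec-sturm}. Since $\Delta_+^{[m]}$ is assembled from the matrix of partial derivatives together with the weight factor, applying it to a single exponential $e^{2\pi i\trace(SZ)}$ returns a polynomial of low degree in $Y^{-1}$ times that exponential; collecting terms shows that $g:=\Delta_+^{[m]}(h)$ is a non-holomorphic form of weight $\tau\otimes\det^2$ whose Fourier coefficient at $T$ is still supported on the index $T$, of the shape $a(T,Y)'=c(T)'\,q_T(Y)$ for an explicit $\End(V_\tau)$-valued $q_T$. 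Because $\det=\wedge^2\mathrm{st}$ for $\GL(2,\CC)$, the coupling of $q_T$ to $\rho_\tau$ is exactly the combinatorics handled by the alternating power $\Gamma$-integrals of section~\ref{sec-alt}.

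Next, substituting into the integral defining $b(T)$ and making the usual substitution $Y\mapsto T^{-1/2}YT^{-1/2}$ separates off the $T$-dependence, leaving a single $T$-independent operator: $C(\tau\otimes\det^2)^{-1}$ composed with the fixed $\Gamma$-integral of $q_{\mathrm{id}}(Y)$ against the kernel $\rho(Y)\,e^{-2\pi\trace(Y)}\det(Y)^{-\frac{m+1}{2}}dY_{inv}$, where $\rho=\tau\otimes\det^2$. Decomposing $q_{\mathrm{id}}$ into the irreducible constituents occurring in $V_\tau\otimes V_{\det^2}$ and evaluating the resulting $\Gamma$-integrals by section~\ref{sec-Gamma-rank-2} --- each value being, by the Weyl character remarks of section~\ref{sec-weyl}, a product of classical $\Gamma$-functions with arguments linear in $k$, and the invertibility of $C(\rho)$ being part of that rank two computation --- and using that the normalization of $C(\rho)$ is precisely the $\Gamma$-function $\Gamma_\rho$ of $\rho$, one arrives at $b(T)=\lambda(k)\,c(T)$ for an explicit scalar $\lambda(k)$ (the surviving operator being scalar on the relevant constituent), where $\lambda(k)$ is a ratio of products of $\Gamma$-functions.

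It then remains to read the dichotomy off $\lambda(k)$. For every $k\geq2$ a reciprocal $\Gamma$-factor in $\lambda(k)$ is evaluated at a non-positive integer, so $\lambda(k)=0$ and $\mathop{St}_{\tau\otimes\det^2}(\Delta_+^{[m]}(h))=0$; this is consistent for $k\geq3$ with Theorem~\ref{theorem_hol_proj} (where $\kappa=k+2>2m$) together with the vanishing of the holomorphic projection of a Maass shift, which one sees by pairing against holomorphic Poincar\'e series and moving $\Delta_+^{[m]}$ onto its lowering adjoint, which annihilates holomorphic forms of the relevant weight. At $k=1$ the factors no longer force a zero (a coincident pole and zero cancelling, or all values already finite), and evaluating the limit gives $\lambda(1)\neq0$; since $h\neq0$ forces $c(T)\neq0$ for some $T$, the image is non-zero precisely when $k=1$, and for highest weight $(k+3,k+2)=(4,3)$ this is a genuine phantom term. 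I expect the main obstacle to be the bookkeeping for this operator-valued integral: the careful placement of $C(\rho)$ inside the integrand that the paper stresses, the Clebsch--Gordan decomposition of the Maass polynomial $q_{\mathrm{id}}$ inside $V_\tau\otimes V_{\det^2}$, and, at the critical weight $k=1$, controlling the coincident zero and pole so that $\lambda(1)$ is provably finite and non-zero rather than $0$ or $\infty$; a secondary check is that the residual operator really leaves $b(T)$ proportional to $c(T)$, so that $\lambda(1)\neq0$ genuinely produces a non-vanishing holomorphic form.
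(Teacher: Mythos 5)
What you have written is a (re)derivation of Theorem~\ref{thm-phantom}, not a proof of the statement in question. The statement is the Conjecture, which asserts that Sturm's operator produces phantom terms on \emph{all} facets of the cone not perpendicular to each of the short simple roots, for \emph{arbitrary} rank $m$, and that these phantom terms always arise as Maass shifts of holomorphic cusp forms of small absolute weight. The paper does not prove this; it explicitly presents it as an expectation suggested by the scalar-weight results of \cite{phantom} and \cite{scalar-weight} together with the single rank-two, single-facet computation of Theorem~\ref{thm-phantom} (highest weight $(k+1,k)$, shifted to $(k+3,k+2)$, non-vanishing exactly at $k=1$). Your proposal stays entirely within that one computation: one rank, one family of $K$-types, one facet (the wall perpendicular to the long root $2e_2$). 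Nothing in it addresses the other facets, higher rank, the general structure of the cone in the root lattice of $\mathfrak{sp}_m$, or why the phantom contribution should in general be exhausted by Maass shifts. So there is a genuine gap: the proposal establishes (at best) one instance of the conjectured phenomenon, which is precisely the evidence the paper already records, and the conjecture itself remains unproven both in the paper and in your argument.

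As a sketch of Theorem~\ref{thm-phantom} itself, your outline is broadly aligned with the paper's section~\ref{sec-sturm}: expand $\Delta_+^{[2]}(e^{2\pi i\trace(TZ)})$ via the product rule for $\det(\partial_Z)$, feed the resulting terms (constants, $\trace(TY)/\det(Y)$, $1/\det(Y)$, $(TY)^{-1}$) into the $\Gamma$-integrals of Proposition~\ref{gamma_standard} and Lemma~\ref{polynoimal-integrals}, and examine the limit in $s$. One correction to your dichotomy mechanism: in the paper the vanishing for $k>1$ does not come from a reciprocal $\Gamma$-factor at a non-positive integer, but from the explicit scalar $b(T,s)\propto \frac{s^2-\frac{s}{2}}{s+k-1}\,\Gamma_2(s+k+\tfrac12)$, whose numerator vanishes at $s=0$; the survival at $k=1$ is the cancellation of that zero against the vanishing denominator $s+k-1$. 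Also note that no Clebsch--Gordan decomposition or appeal to section~\ref{sec-weyl} is needed in the paper's computation, since all the surviving operators turn out to be scalar multiples of $\mathbf 1_2$ (plus the single $(TY)^{-1}$ term handled directly). But none of this repairs the main defect: the target statement is the general conjecture, and your argument does not reach it.
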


The paper is organized as follows. In section~\ref{Poincare-series} we study non-holomorphic Poincar\'e series as functions on the symplectic group. This is the natural point of view with respect to the Lie algebra action.
Section~\ref{sec-on-H} is devoted to the interplay of functions on group level and on the Siegel half space. We define the vector valued version of Sturm's operator, and prove its coincidence with the holomorphic projection in case of large weight. In section~\ref{sec-sturm} we show the occurrence of phantom terms.
In section~\ref{section_gamma_integral} we determine the vector valued gamma functions $\Gamma(\rho)$ as described above.

\section{Poincar\'e series}\label{Poincare-series}
\subsection{Definition and convergency}

For  the irreducible algebraic representation $(\rho,V_\rho)$ we assume  $V_\rho=\CC^N$,  $\rho: \GL(m,\CC)\to\GL(N,\CC)$, to have the properties  $\rho(x)'=\rho(x')$ and $\rho(\bar x)=\overline{\rho(x)}$ for all $x\in \GL(m,\CC)$.
This determines  $\rho$  uniquely. Here $x'$ denotes the transpose of the matrix $x$.
Then $v'\cdot\bar w$ defines the intrinsic scalar product on $V_\rho$ which is $\rho(U(m))$-invariant.

\begin{prop}\label{konvergenz_poincare}
Let $\rho$  be the irreducible rational  representation of $\GL(m,\CC)$ of dominant highest weight $(l_1,l_2,\dots,l_m)$. Let $\kappa=l_m$ be its absolute weight.
  Define
 the non-holomorphic Poincar\'e series
 \begin{equation*}
  P_T(g,s_1,s_2)\:=\:\sum_{\gamma\in\Gamma_\infty\backslash \Gamma} \rho(J(\gamma g,i))^{-1}\trace(T\im(\gamma g\cdot i))^{s_1}\det(\im(\gamma g\cdot i))^{s_2}e^{2\pi i\trace(T\gamma g\cdot i)}\:.
 \end{equation*}
Applied to any vector $v\in V_\rho$ the Poincar\'e series converge absolutely and uniformly on compact sets in the sense that this holds for 
$\lvert\!\lvert P_T(g,s_1,s_2)v\rvert\!\rvert$ in the domain
\begin{equation*}
 \left\{(s_1,s_2)\in\CC^2\mid \re s_2>m-\frac{\kappa}{2} \textrm{ and } \re(ms_2+s_1)>m^2-\frac{\sum_jl_j}{2}\right\}\:.
\end{equation*}
For fixed such $(s_1,s_2)$ the function $\lvert\!\lvert P_T(g,s_1,s_2)v\rvert\!\rvert$ is bounded and belongs to $L^2(\Gamma\backslash G)$.
In particular, in case the absolute weight $\kappa>2m$ is large, at the critical point $(s_1,s_2)=(0,0)$ the Poincar\'e series converge absolutely. 
\end{prop}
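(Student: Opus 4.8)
The plan is to dominate $\lVert P_T(g,s_1,s_2)v\rVert$ summand by summand in the intrinsic norm on $V_\rho$ and thereby reduce the whole convergence question to that of a single scalar Eisenstein-type series, whose abscissa of convergence is classical. The one genuinely new input is the \emph{exact} operator norm of $\rho(J(\gamma g,i))^{-1}$.

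First I would exploit the two normalisations $\rho(x)'=\rho(x')$ and $\rho(\bar x)=\overline{\rho(x)}$. They make the intrinsic form $v'\bar w$ invariant under $\rho(U(m))$, and they force each $\rho(a)$, for $a$ a diagonal matrix with positive entries $a_1,\dots,a_m$, to be positive and self-adjoint, with eigenvalues $\prod_i a_i^{\mu_i}$ as $\mu$ runs through the weights of $\rho$. Writing $J(\gamma g,i)=k_1 a k_2$ in singular-value form, $k_1,k_2\in U(m)$, $a_1\ge\dots\ge a_m>0$, unitarity of the $\rho(k_j)$ gives $\lVert\rho(J(\gamma g,i))^{-1}\rVert=\max_\mu\prod_i a_i^{-\mu_i}$. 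Since the weights of $\rho$ lie in the convex hull of the $S_m$-orbit of the dominant highest weight $l=(l_1,\dots,l_m)$, the rearrangement inequality locates this maximum at the lowest weight $(l_m,\dots,l_1)$. Finally $\im(\gamma g\cdot i)=(J(\gamma g,i)\,\overline{J(\gamma g,i)}')^{-1}$ has eigenvalues $a_i^{-2}$; writing $\lambda_1\ge\dots\ge\lambda_m>0$ for the eigenvalues of $Y_\gamma:=\im(\gamma g\cdot i)$, so that $\prod_j\lambda_j=\det Y_\gamma$, one obtains
\[
 \bigl\lVert\rho(J(\gamma g,i))^{-1}\bigr\rVert \;=\; \prod_{j=1}^{m}\lambda_j^{\,l_j/2}.
\]

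Next I would estimate a single summand: its norm is at most $\lVert v\rVert\,\prod_j\lambda_j^{\,l_j/2+\re s_2}\cdot\trace(TY_\gamma)^{\re s_1}e^{-2\pi\trace(TY_\gamma)}$. Using $l_j\ge l_m=\kappa$ and $\lambda_j\le\lambda_1$ I separate off the determinant,
\[
 \prod_j\lambda_j^{\,l_j/2+\re s_2}
 \;=\;(\det Y_\gamma)^{\kappa/2+\re s_2}\prod_j\lambda_j^{(l_j-\kappa)/2}
 \;\le\;(\det Y_\gamma)^{\kappa/2+\re s_2}\,\lambda_1^{\,(\sum_j l_j-m\kappa)/2},
\]
and, since $T$ is a fixed positive definite matrix and $\lambda_1\le\trace Y_\gamma\le m\lambda_1$, the trace and exponential factors are $\le C\,\lambda_1^{\re s_1}e^{-c\lambda_1}$ with $c>0$, $C$ depending only on $T,s_1$. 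Put $\eta:=\tfrac12\bigl(\sum_j l_j-m\kappa\bigr)+\re s_1$. If $\eta\ge0$ then $\lambda_1^{\eta}e^{-c\lambda_1}$ is bounded; if $\eta<0$ I treat $\lambda_1\ge1$ (again bounded) and $\lambda_1<1$ separately, using $\lambda_1\ge(\det Y_\gamma)^{1/m}$ to obtain $\lambda_1^{\eta}\le(\det Y_\gamma)^{\eta/m}$. In every case the summand is $\le\lVert v\rVert\,(\det Y_\gamma)^{\sigma}$, up to a constant depending only on $T,s_1,s_2$, with
\[
 \sigma\;=\;\min\Bigl\{\,\tfrac{\kappa}{2}+\re s_2,\;\;\tfrac1m\bigl(\tfrac12\sum_j l_j+m\re s_2+\re s_1\bigr)\Bigr\}.
\]
The two hypotheses $\re s_2>m-\kappa/2$ and $\re(ms_2+s_1)>m^2-\tfrac12\sum_j l_j$ are exactly what forces both entries of this minimum to exceed $m$, hence $\tfrac{m+1}{2}$.

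All but finitely many $\gamma\in\Gamma_\infty\backslash\Gamma$ satisfy $\det Y_\gamma<1$, uniformly for $g$ in a compact set; discarding them, absolute and locally uniform convergence of $\sum_\gamma\lVert(\text{summand})\rVert$ follows from the classical convergence of $\sum_{\gamma\in\Gamma_\infty\backslash\Gamma}\det(\im(\gamma g\cdot i))^{\sigma}$ for $\re\sigma>\tfrac{m+1}{2}$. The function $g\mapsto\lVert P_T(g,s_1,s_2)v\rVert$ is $\Gamma$-invariant by the cocycle relation for $J$, and since $\bigl\lvert e^{2\pi i\trace(T\gamma g\cdot i)}\bigr\rvert=e^{-2\pi\trace(T\,\im(\gamma g\cdot i))}$, each summand, hence — as for any Poincar\'e series of a rapidly decreasing function — the whole sum, decays rapidly in every cusp; boundedness and membership in $L^2(\Gamma\backslash G)$ follow. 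Finally $(s_1,s_2)=(0,0)$ lies in the domain once $\kappa>2m$: then $0>m-\kappa/2$, while $l_j\ge\kappa>2m$ for all $j$ gives $\sum_j l_j>2m^2$, i.e.\ $0>m^2-\tfrac12\sum_j l_j$.

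I expect the conceptual crux to be the closed form $\lVert\rho(J(\gamma g,i))^{-1}\rVert=\prod_j\lambda_j^{l_j/2}$, and the fiddly point to be the case $\re s_1<0$, where $\trace(TY_\gamma)^{\re s_1}$ can blow up along sequences with $\trace Y_\gamma\to0$ — this is precisely where the second hypothesis and the bound $\lambda_1\ge(\det Y_\gamma)^{1/m}$ are needed.
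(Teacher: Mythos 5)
Your norm computation $\lVert\rho(J(\gamma g,i))^{-1}\rVert=\prod_j\lambda_j^{l_j/2}$ is correct and in fact sharper than the estimate $\trace(Y)^{\frac12\sum_j(l_j-\kappa)}\det(Y)^{\kappa/2}$ the paper uses, and your translation of the two hypotheses into ``$\sigma>m$'' is consistent with the stated domain. The gap is in the final step, where you absorb the exponential into a constant and compare with $\sum_{\gamma\in\Gamma_\infty\backslash\Gamma}\det(\im(\gamma g\cdot i))^{\sigma}$. For a Poincar\'e series, $\Gamma_\infty$ is (up to the finite unit group of $T$) only the translation lattice $Z\mapsto Z+S$: it cannot contain the $\GL(m,\ZZ)$-part of the Siegel parabolic, because $e^{2\pi i\trace(T\gamma Z)}$ is invariant only under $U$ with $UTU'=T$. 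Hence for $m\geq 2$ the elements $\left(\begin{smallmatrix}U&0\\0&U'^{-1}\end{smallmatrix}\right)$, $U\in\GL(m,\ZZ)$, lie in pairwise distinct cosets of $\Gamma_\infty\backslash\Gamma$ and all satisfy $\det(\im(\gamma g\cdot i))=\det(\im(g\cdot i))$. Your comparison series therefore diverges for \emph{every} $\sigma$, and your auxiliary claim that all but finitely many cosets have $\det Y_\gamma<1$ fails for the same reason (take $\det Y\geq 1$). The factor $e^{-2\pi\trace(TY_\gamma)}$, which you discarded, is exactly what tames this $\GL(m,\ZZ)$-direction, since $\trace(TUYU')\to\infty$ there; it is also why the proposition's threshold is $m$ rather than $\frac{m+1}{2}$, which would be the Eisenstein-series threshold for the full parabolic.

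The repair is what the paper does: keep the factors $\trace(TY_\gamma)^{\re s_1}e^{-2\pi\trace(TY_\gamma)}$ and compare with the scalar series $S_T(g,k_1,k_2)$ of Theorem~\ref{altes_konvergenz_resultat}, whose convergence for $\re k_2>m$ and $\re(k_2+\frac{k_1}{m})>m$ is the essential (cited) input from \cite{holproj}. Concretely, the bound $\lVert\rho(J(g,i))^{-1}\rVert\leq\trace(Y)^{\frac12\sum_j(l_j-\kappa)}\det(Y)^{\kappa/2}$ shows that $\lVert P_T(g,s_1,s_2)v\rVert$ is dominated by the absolute series of $S_T\bigl(g,\,s_1+\tfrac12\sum_j(l_j-\kappa),\,s_2+\tfrac{\kappa}{2}\bigr)$, which converges precisely on the stated domain; boundedness and membership in $L^2(\Gamma\backslash G)$ are then inherited from that theorem rather than argued separately. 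Your sharper norm identity would slot into this scheme without change, but it does not by itself remove the need for the exponential damping.
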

The most natural definition of Poincare series on $G$ would be one in $m$  complex variables,
\begin{equation*}
  P_T(g,s_1,\dots,s_m)\:=\:\sum_{\gamma\in\Gamma_\infty\backslash \Gamma} \rho(J(\gamma g,i))^{-1}
  \prod_{j=1}^m \trace((TY)^{[j]})^{s_j}\cdot
  e^{2\pi i\trace(T\gamma g\cdot i)}\:.
 \end{equation*}
Here $Y^{[j]}$ denotes the $j$-th alternating power of $Y$, i.e. a matrix of size $\binom{m}{j}$
 with entries the $(j\times j)$-minors of $Y$.
The convergence of these series in $(s_1,\dots,s_m)$ follows from that of the above in $(\tilde s_1,\tilde s_2)=(\sum_{j<m}j\cdot s_j,s_m)$, because $\trace(Y^{[q]})\leq\trace(Y)^q$.
We include a notion of non-holomorphic Poincar\'e series in order to give a clue how holomorphic continuation for small weights may be obtained.
However, the spectral theoretic strategy of applying adequate Casimir operators to obtain the continuations by resolvents, is involved because the higher derivatives belong to higher dimensional spaces.

For the proof of proposition~\ref{konvergenz_poincare} we use the following result.
\begin{thm}\cite[theorem~4.3]{holproj}\label{altes_konvergenz_resultat}
The series
\begin{equation*}
S_T(g,k_1,k_2)\:=\: \sum_{\gamma\in\Gamma_\infty\backslash \Gamma}\exp(2\pi i\trace(T \gamma g\cdot i))
\trace(T\im (\gamma g\cdot i))^{k_1}\det(\im(\gamma g\cdot i))^{k_2}
\end{equation*}
converges absolutely and uniformly on compact sets in the  cone 
\begin{equation*}
 \left\{(k_1,k_2)\in\CC\mid \re k_2>m \textrm{ and } \re (k_2+ \frac{k_1}{m})>m\right\}\:.
\end{equation*}
For $(k_1,k_2)$ fixed, it is absolutely bounded by a constant independent of $\tau$ and belongs to $L^2(\Gamma\backslash G)$.
\end{thm}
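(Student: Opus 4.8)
The plan is to pass to absolute values, reduce the convergence to a single Eisenstein-type majorant by two elementary inequalities, and then settle that majorant by reduction theory, which is the one genuinely arithmetic step. First I would observe that convergence is a statement about absolute values, and that $|\exp(2\pi i\trace(T\gamma g\cdot i))|=\exp(-2\pi\trace(TY_\gamma))$, where $Y_\gamma:=\im(\gamma g\cdot i)>0$; since the two remaining factors are real powers of the positive reals $\trace(TY_\gamma)$ and $\det(Y_\gamma)$, their moduli depend only on $\re k_1,\re k_2$. Thus it suffices to treat real $k_1,k_2$ and to bound the nonnegative series with general term $\Phi_\gamma=\exp(-2\pi\trace(TY_\gamma))\trace(TY_\gamma)^{k_1}\det(Y_\gamma)^{k_2}$. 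Two inequalities drive the estimate. Writing $\gamma=\left(\begin{smallmatrix}\ast&\ast\\C&D\end{smallmatrix}\right)$ and $Z_0=g\cdot i$, $Y_0=\im Z_0$, the transformation $\im(\gamma Z_0)={}^t\overline{(CZ_0+D)}^{-1}Y_0(CZ_0+D)^{-1}$ gives $\det Y_\gamma=\det Y_0\,|\det(CZ_0+D)|^{-2}$; and the arithmetic--geometric mean inequality applied to the positive eigenvalues of $TY_\gamma$ yields $\trace(TY_\gamma)\geq c_0\det(Y_\gamma)^{1/m}$ with $c_0=m(\det T)^{1/m}$. I would then split on the sign of $k_1$: for $k_1\geq 0$ the bound $t^{k_1}e^{-\pi t}\leq C_1$ gives $\Phi_\gamma\leq C_1\,e^{-\pi\trace(TY_\gamma)}\det(Y_\gamma)^{k_2}$, while for $k_1<0$ the same AM--GM inequality gives $\trace(TY_\gamma)^{k_1}\leq c_0^{k_1}\det(Y_\gamma)^{k_1/m}$, hence $\Phi_\gamma\leq c_0^{k_1}\,e^{-2\pi\trace(TY_\gamma)}\det(Y_\gamma)^{k_2+k_1/m}$. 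In either case $\Phi_\gamma\leq C\,e^{-a\trace(TY_\gamma)}\det(Y_\gamma)^{\sigma}$ for some $a>0$, with $\sigma=k_2$ when $k_1\geq 0$ and $\sigma=k_2+k_1/m$ when $k_1<0$; the two cone conditions $\re k_2>m$ and $\re(k_2+k_1/m)>m$ are exactly what guarantee $\sigma>m$ in the respective cases. Everything thereby reduces to
\[
 \Xi(\sigma)\:=\:\sum_{\gamma\in\Gamma_\infty\backslash\Gamma}e^{-a\trace(TY_\gamma)}\det(Y_\gamma)^{\sigma}\:<\:\infty\qquad(\sigma>m).
\]

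For this step I would first normalise the base point: as $g$ runs through a compact set, $Z_0=g\cdot i$ stays in a compact subset of $\mathcal H$, so $\det Y_0$ and the implied constants are uniform there, and it suffices to treat $Z_0=iI$. Then $CZ_0+D=Ci+D$, and using $CD^t=DC^t$ for a symplectic bottom row one finds $Y_\gamma=(CC^t+DD^t)^{-1}$. Since $\Gamma_\infty$ is the group of translations $Z\mapsto Z+S$, the cosets $\Gamma_\infty\backslash\Gamma$ are parametrised by the bottom rows $(C,D)$, and $\det Y_\gamma=\det(CC^t+DD^t)^{-1}$, $\trace(TY_\gamma)=\trace\!\big(T(CC^t+DD^t)^{-1}\big)$.

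The convergence of $\Xi(\sigma)$ for $\sigma>m$ is the arithmetic heart, and this is where I expect the main obstacle. The subtlety is that $\det(CC^t+DD^t)$ is constant on each orbit of the Levi $\GL(m,\ZZ)$ acting by $(C,D)\mapsto(UC,UD)$, and these orbits are infinite for $m\geq 2$; consequently majorising by the determinant alone, i.e. dropping the exponential, already produces a divergent series, so the exponential factor is genuinely indispensable, providing the decay along each Levi orbit. I would therefore decompose the sum along $\GL(m,\ZZ)$-orbits: the outer sum over $P_\infty\backslash\Gamma$ is the classical Siegel Eisenstein series $\sum\det(Y_\gamma)^{\sigma}$, absolutely convergent for $\sigma>(m+1)/2$, while the inner sum over a fixed orbit, $\sum_{U\in\GL(m,\ZZ)}e^{-a\trace(T(U(CC^t+DD^t)U^t)^{-1})}$, is a convergent theta-type sum whose growth, as the orbit invariant $\det(CC^t+DD^t)$ tends to infinity, is controlled by Minkowski reduction of the inner form. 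Carrying out this reduction-theoretic count carefully --- essentially a Hermite--Minkowski lattice-point estimate on $\GL(m,\ZZ)$ combined with the Eisenstein convergence --- yields absolute convergence precisely for $\sigma>m$. Since the majorant may be taken with constants locally uniform in $(k_1,k_2)$ and in $g$, the convergence is uniform on compact sets. This count, together with the determination of the sharp abscissa $m$ from it, is the technical crux.

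The qualitative assertions then follow from the same majorant. For fixed $(k_1,k_2)$ in the cone the bounds are uniform in $g$, so $S_T$ is a finite, continuous function on $G$; moreover as $g$ approaches a cusp one has $Y_0\to\infty$ and the exponential $e^{-2\pi\trace(TY_0)}$ with $T>0$ overwhelms the polynomial factors, forcing $S_T\to 0$ there. Being continuous, left $\Gamma$-invariant, and decaying at the cusps, $S_T$ is a bounded function on the finite-volume quotient $\Gamma\backslash G$, with a bound uniform in $g$; hence $S_T\in L^2(\Gamma\backslash G)$.
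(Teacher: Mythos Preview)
The paper does not prove this statement: it is quoted verbatim as Theorem~4.3 of the author's earlier article \cite{holproj} and then invoked as a black box in the proof of Proposition~\ref{konvergenz_poincare}. There is therefore no proof in the present paper to compare your attempt against.

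On the substance of your sketch, the opening reductions are correct and efficient. Passing to $\re k_1,\re k_2$, using $t^{k_1}e^{-\pi t}\le C$ for $k_1\ge 0$ and the AM--GM bound $\trace(TY_\gamma)\ge m(\det T)^{1/m}\det(Y_\gamma)^{1/m}$ for $k_1<0$, you recover exactly the two half-planes defining the cone and collapse everything to the majorant $\Xi(\sigma)=\sum_{\Gamma_\infty\backslash\Gamma}e^{-a\trace(TY_\gamma)}\det(Y_\gamma)^\sigma$ with the requirement $\sigma>m$. The identification $Y_\gamma=(CC^t+DD^t)^{-1}$ at $Z_0=iI$ is also right.

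The genuine gap is in your treatment of $\Xi(\sigma)$. Splitting off the Levi $\GL(m,\ZZ)$ and combining the Siegel--Eisenstein abscissa $(m+1)/2$ with a theta bound on the inner sum is the right architecture, but as stated it does not produce the threshold $m$. The inner form $U\mapsto\trace(U^tTUY_{\gamma_0})$ on $M_m(\ZZ)$ has Gram matrix $T\otimes Y_{\gamma_0}$, so a naive Poisson estimate gives the inner sum growth $\det(Y_{\gamma_0})^{-m/2}$; combined with the Eisenstein threshold this only yields $\sigma>m+\tfrac12$. Reaching $\sigma>m$ requires finer control of how $Y_{\gamma_0}$ actually degenerates along the boundary components and of the interaction between the two sums, i.e.\ genuine reduction theory rather than two separate crude bounds; this is precisely the work carried out in \cite{holproj}, and your sentence ``carrying out this reduction-theoretic count carefully \dots\ yields $\sigma>m$'' is where the entire proof sits. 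A related slip appears in your boundedness paragraph: you first make all constants uniform only on compacta in $g$, then assert uniformity in all $g$ by examining a single exponential term near the cusp; what is needed is that the majorant $\Xi(\sigma)$ itself is bounded on a Siegel domain, which again comes out of the same reduction-theoretic estimates once they are done properly.
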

\begin{proof}[Proof of proposition~\ref{konvergenz_poincare}]
 For $g\in G$ let $Z=X+iY=g\cdot i\in\mathcal H$. There exists
 \begin{equation*}
  g_Z\:=\:\begin{pmatrix} Y^\frac{1}{2}&U\\0&Y^{-\frac{1}{2}}
                     \end{pmatrix}\:\in G\:,
 \end{equation*}
where $Y^\frac{1}{2}$ is the symmetric positive definite square root of $Y$,
such that $g_Z\cdot i=Z$ and such that and $g=g_Zk$ for some $k$ in the maximal compact subgroup $K$ of $G$.
Further, there exists $k_1\in\SO(m)$ such that $D=k_1Y^\frac{1}{2}k_1'$ is diagonal, $D=\mathop{diag}(d_1,\dots,d_m)$ for positive eigenvalues $d_j$  of $Y^\frac{1}{2}$.
We compute
\begin{equation*}
 \rho(J(g,i))^{-1}\:=\:\rho(J(g_Z,i)J(k,i))^{-1}\:=\:\rho(J(k,i)^{-1})\rho(Y^\frac{1}{2})\:=\:\rho(J(k,i)^{-1}k_1')\rho(D)\rho(k_1)\:.
\end{equation*}
For computing the norm $\lvert\!\lvert \rho(J(g,i))^{-1}v\rvert\!\rvert$ for a vector $v\in V_\rho$, unitary factors $\rho(k)$ for $k\in U(m)$ don't fall into account, so
\begin{equation*}
 \lvert\!\lvert \rho(J(g,i))^{-1}v\rvert\!\rvert \:\leq \:\lvert\!\lvert \rho(D)\rvert\!\rvert\cdot \lvert\!\lvert v\rvert\!\rvert\:.
\end{equation*}
We seize the operator norm $\lvert\!\lvert \rho(D)\lvert\!\lvert$.
The action of the diagonal matrix $D$ on $V_\rho$ is determined by the weights $\lambda=(\lambda_1,\dots,\lambda_m)$ of $\rho$. 
For the absolute weight $\kappa\geq 0$  of $\rho$ we have $\lambda_j-\kappa\geq 0$, $j=1,\dots,m$, and there is $j$ such that $\lambda_j=\kappa$.
If $v$ is a normalized weight vector for $\lambda$, then
\begin{equation*}
 \lvert\!\lvert \rho(D)v\rvert\!\rvert \:=\: \prod_{j=1}^m d_j^{\lambda_j}\:= \:\prod_{j=1}^m d_j^{\lambda_j-\kappa}\cdot \det(D)^\kappa\:\leq\:
 \trace(Y)^{\frac{1}{2}\sum_j(\lambda_j-\kappa)} \cdot\det(Y)^\frac{\kappa}{2}\:.
\end{equation*}
For dominant weights $\lambda$ we have $\lambda_1\geq\lambda_2\geq\dots\geq\lambda_m=\kappa\geq 0$, 
and $\lambda=l-\sum_{\alpha_i }n_i\alpha_i$ for some integers $n_i\geq 0$ and the simple roots $\alpha_i$
of $\mathfrak{gl}_m$. 
Any other weight is a conjugate of a dominant one under the Weyl group, which consists of permutations of the coordinates. So for all weights $\lambda$ of $\rho$ we have
\begin{equation*}
 0\:\leq\:\sum_{j=1}^m (\lambda_j-\kappa)\:\leq\: \sum_{j=1}^m (l_j-\kappa)\:.
\end{equation*}
Accordingly, the operator norm is seized by
\begin{equation*}
 \lvert\!\lvert \rho(D)\rvert\!\rvert \:\leq \:\trace(Y)^{\frac{1}{2}\sum_j(l_j-\kappa)} \cdot\det(Y)^\frac{\kappa}{2}\:.
\end{equation*}
So the absolute series of $S_T(g,s_1+\frac{1}{2}\sum_j(l_j-\kappa),s_2+\frac{\kappa}{2})$ in Theorem~\ref{altes_konvergenz_resultat} dominates 
$\lvert\!\lvert P_T(g,s_1,s_2)\cdot v\rvert\!\rvert$, and the claim follows from Theorem~\ref{altes_konvergenz_resultat}.
\end{proof}

\subsection{Lie algebra action}
We make sure that
 the Poincar\'e series transform adequately under the 
action of the Lie algebra $\gg_\CC=\mathfrak{sp}_{m,\CC}$. Following~\cite{holproj} we choose the following basis of 
$\mathfrak g_\CC=\mathfrak p_+\oplus\mathfrak p_-\oplus\mathfrak k_\CC$,
where $\mathfrak k_\CC$ is the Lie algebra of $K$ given by the matrices satisfying
\begin{equation*}
 \begin{pmatrix}A&S\\-S&A\end{pmatrix}\:, \quad A'=-A\:, \quad S'=S\:,
\end{equation*}
and
\begin{equation*}
 \mathfrak p_\pm=\left\{\begin{pmatrix}X&\pm iX\\\pm iX&-X\end{pmatrix},\quad X'=X\right\}.
\end{equation*}
Let $e_{kl}\in M_{m,m}(\mathbb C)$ be the elementary matrix having entries $(e_{kl})_{ij}=\delta_{ik}\delta_{jl}$ 
and let $X^{(kl)}=\frac{1}{2}(e_{kl}+e_{lk})$.
 The elements 
\begin{equation*}
 (E_\pm)_{kl}\:=\:(E_\pm)_{lk}
\end{equation*}
of $\mathfrak p_\pm$ are defined to be those corresponding to $X=X^{(kl)}$, 
$1\leq k,l\leq m$. 
Then $(E_\pm)_{kl}$, $1\leq k\leq l\leq m$ form a basis of $\mathfrak p^\pm$.
A basis of $\mathfrak k_\CC$  is given by $B_{kl}$, for $1\leq k,l\leq m$, where  $B_{kl}$ corresponds 
to 
\begin{equation*}
 A_{kl}\:=\:\frac{1}{2}(e_{kl}-e_{lk})\:\textrm{ and }\: S_{kl}\:=\:\frac{i}{2}(e_{kl}+e_{lk})\:.
\end{equation*}
For abbreviation, let $E_\pm$ be the matrix having entries $(E_\pm)_{kl}$. 
Similarly, let $B=(B_{kl})_{kl}$ be the matrix with entries $B_{kl}$ and let $B^\ast$ be its transpose 
having entries $B_{kl}^\ast=B_{lk}$.

Let us recall some facts on derivatives.
In order to compute the action of $\gg_\CC$ on  $(\rho,V_\rho)$-valued functions, we must evaluate the total differential $D\rho$ at various places $A$.
For $A$ in $\GL(m,\CC)$ let us denote by $m_A$  the multiplication in $\GL(m,\CC)$ by $A$ from the left, $m_A(g)=Ag$, respectively $m_{\rho(A)}(G)=\rho(A)G$ in $\GL(V_\rho)$.
Then we can compute the differential of $\rho\circ m_A=m_{\rho(A)}\circ\rho$ in $\mathbf 1_m=\id_{\GL(m,\CC)}$ in two different ways.
\begin{equation*}
 D(\rho\circ m_A)\mid_{\mathbf 1_m}\:=\: D\rho\mid_{m_A(\mathbf 1_m)}\circ Dm_A\mid_{\mathbf 1_m}=D\rho\mid_A\circ m_A\:, 
\end{equation*}
respectively,
\begin{equation*}
 D(m_{\rho(A)}\circ\rho)\mid_{\mathbf 1_m}\:=\: D(m_{\rho(A)})\mid_{\mathbf 1_m}\circ D\rho\mid_{\mathbf 1_m}\:=\: m_{\rho(A)}\circ d\rho\:,
\end{equation*}
where $d\rho$ is the differential of  $\rho$ at the identity, i.e. the corresponding Lie algebra representation.
It follows that
\begin{equation*}
 D\rho(A)\:=\: D\rho\mid_A\:=\:m_{\rho(A)}\circ d\rho\circ m_A^{-1}\:.
\end{equation*}
Accordingly, for a $\GL(m,\CC)$-valued $C^\infty$-function $A(t)$ we have
\begin{equation*}
 \frac{d}{dt}\rho(A(t))\mid_{t=0}\:=\: d\rho\left(A(0)^{-1}\frac{d}{dt}A(t)\mid_{t=0}\right)\circ\rho(A(0))\:.
\end{equation*}
We are specially interested in the actions $X\rho(j(g,i)^{-1})$ for Lie algebra elements $X$.
For elements $X$ of the real Lie algebra $\gg=\gg_\RR$, this action is given by
\begin{equation*}
  X\rho(J(g,i)^{-1})\:=\:\frac{d}{dt}\rho(J(g\exp(tX),i)^{-1})\mid_{t=0}\:.
\end{equation*}
For elements of the complex Lie algebra we obtain the action by  putting together the actions of the real and the imaginary part.
Recalling that the differential  of the inverse mapping $f(g)=g^{-1}$ is given by $Df(g)=-g^{-2}$, we find
\begin{equation*}
 X\rho(J(g,i)^{-1})\:=\:-d\rho\left(J(g,i)^{-1}\cdot XJ(g,i)\right)\circ\rho(J(g,i)^{-1}) \:.
\end{equation*}
We often use the abbreviation $J=J(g,i)$.
Recalling the actions of the basis elements,
\begin{eqnarray*}
 B_{ab}J(g,i)&=& J(g,i)e_{ab}\:,\\
 (E_-)_{ab}J(g,i)&=& 0\:,\\
 (E_+)_{ab}J(g,i)&=& -2J^{-1}\bar J X^{(ab)}\:,
\end{eqnarray*}
we obtain
\begin{eqnarray}
 B_{ab}\rho(J(g,i)^{-1})&=& -d\rho(e_{ab})\circ \rho(J(g,i)^{-1})\:,\label{B-drho-action}\\
 (E_-)_{ab}\rho(J(g,i)^{-1})&=& 0\:,\\
 (E_+)_{ab}\rho(J(g,i)^{-1})&=& +2d\rho(J^{-1}\bar J X^{(ab)})\circ\rho(J(g,i)^{-1})\:.
\end{eqnarray}
Here $k=J(\tilde k,i)\in U(m)$ is the image of the $K$-component $\tilde k$ of $g$ with respect to the decomposition $g=\tilde g_z\cdot \tilde k$, where
\begin{equation*}
 \tilde g_Z\:=\:\begin{pmatrix} S&U\\0&S^{-T}\end{pmatrix}
\end{equation*}
with a lower triangular matrix $S$ such that $g\cdot i=\tilde g_Z\cdot i=Z$, i.e. $SS'=Y=\im(g\cdot i)$.

Now we give the action of the Lie algebra basis on the summands
\begin{equation*}
 H_T(g,s_1,s_2)\:=\: \rho(J(g,i)^{-1}) h_T(g\cdot i,s_1,s_2)\:
\end{equation*}
of the $\rho$-valued Poincar\'e series. Here we abbreviate
\begin{equation*}
 h_T(Z,s_1,s_2)\:=\:\trace(TY))^{s_1}\det(Y))^{s_2}e^{2\pi i\trace(TZ)}\:.
\end{equation*}
Recalling the results of ~\cite[Lemma~7.1]{holproj}, we obtain
\begin{equation*}
 B_{ab}H_T(g,s_1,s_2)\:=\: - d\rho(e_{ab})\circ\rho(J(g,i)^{-1})\cdot h_T(g\cdot i,s_1,s_2)\:,
\end{equation*}
\begin{eqnarray*}
 (E_-)_{ab}H_T(g,s_1,s_2)&=& \rho(J(g,i)^{-1})\cdot 2s_1(k'S'TSk)_{ab}\cdot h_T(g\cdot i,s_1-1,s_2)\\
 &&+\rho(J(g,i)^{-1})\cdot 2s_2(k'k)_{kl}\cdot h_T(g\cdot i,s_1,s_2)\:,
\end{eqnarray*}
and 
\begin{eqnarray*}
(E_+)_{ab}H_T(g,s_1,s_2)&=&  + d\rho(J^{-1}\bar J2X^{(ab)})\circ\rho(J^{-1})\cdot    h_T(g\cdot i,s_1,s_2)    \\
&& +\rho(J^{-1})\cdot (2s_2(J^{-1}\bar J)_{ab}-8\pi(\bar JYTY\bar J)_{ab})\cdot h_T(g\cdot i,s_1,s_2)\\
&& +\rho(J^{-1})\cdot 2s_1(\bar JYTY\bar J)_{ab}\cdot h_T(g\cdot i,s_1-1,s_2)\bigr)\:.
\end{eqnarray*}
Notice that each  component of $\bar JYTY\bar J$ can be sized by $\trace(TY)$, and that  terms in $k\in U(m)$ only vary in  compact sets. Also, $d\rho(e_{ab})$ and $d\rho(X^{(ab)})$ are 
linear transformations of $V_\rho$.
So the norm of each single term of the above can be sized up to a global constant by  the norm of $H_T(g,s_1,s_2)$.
We conclude that the Poincar\'e series allow termwise differentiations:
\begin{prop}\label{prop-derivatives}
The derivatives 
 \begin{equation*}
  XP_T(g,s_1,s_2)\:=\: \sum_{\gamma\in\Gamma_\infty\backslash \Gamma}XH_T(\gamma g,s_1,s_2)
 \end{equation*}
by elements $X$ of the enveloping Lie algebra $\mathfrak U(\gg_\CC)$ have the same convergency properties  as the Poincar\'e series themselves.

In particular, in the case of large weight $\kappa>2m$, the Poincar\'e series converge in $(s_1,s_2)=(0,0)$, and vanish under the action of $E_-$.
\end{prop}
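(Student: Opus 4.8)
To prove the proposition I would separate two claims: first, that every $X\in\mathfrak U(\gg_\CC)$ may be applied to the Poincar\'e series summand by summand, the resulting series sharing the domain of absolute and locally uniform convergence, the boundedness, and the membership in $L^2(\Gamma\backslash G)$ established in Proposition~\ref{konvergenz_poincare}; and second, the two consequences at large weight. For the first claim I would start with a single basis element $X\in\{B_{ab},(E_+)_{ab},(E_-)_{ab}\}$ and read the shape of $XH_T(g,s_1,s_2)$ off the three explicit formulas displayed above. Each of the finitely many summands there is of the form $L(g)\,\rho(J(g,i)^{-1})\,h_T(g\cdot i,s_1-n,s_2)$ with $n\in\{0,1\}$, where $L(g)\in\End(V_\rho)$ is a product of the fixed bounded operators $d\rho(e_{ab})$ and $d\rho(J^{-1}\bar J\,2X^{(ab)})$ (the latter bounded uniformly in $g$, since $J^{-1}\bar J=k^{-1}\bar k$ is unitary and $k\in U(m)$), the scalars $s_1,s_2,\pi,(J^{-1}\bar J)_{ab}=O(1)$, and the matrix coefficients $(\bar JYTY\bar J)_{ab}$, $(k'S'TSk)_{ab}$, which by the remark preceding the proposition are $O_T(\trace(TY))$. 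Together with the operator norm bound $\|\rho(J(g,i)^{-1})\|\le\trace(\im(g\cdot i))^{\frac12\sum_j(l_j-\kappa)}\det(\im(g\cdot i))^{\frac\kappa2}$ from the proof of Proposition~\ref{konvergenz_poincare}, and after absorbing the extra power of $\trace(TY)$ (when it does not cancel the shift $s_1\mapsto s_1-1$) at the cost of relaxing $e^{-2\pi\trace(TY)}$ to $e^{-2\pi(1-\varepsilon)\trace(TY)}$, this bounds each $\|XH_T(\gamma g,s_1,s_2)v\|$, by a constant independent of the coset $\gamma$, by the corresponding summand of the majorant $S_{(1-\varepsilon)T}\bigl(g,\re s_1+\tfrac12\sum_j(l_j-\kappa),\re s_2+\tfrac\kappa2\bigr)$ already used in Proposition~\ref{konvergenz_poincare}. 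Since the convergence cone of Theorem~\ref{altes_konvergenz_resultat} does not depend on $T$, the series $\sum_\gamma\|XH_T(\gamma g,s_1,s_2)v\|$ converges absolutely and uniformly on compact subsets of $G$ on exactly the stated $(s_1,s_2)$-domain and is bounded and square integrable over $\Gamma\backslash G$.

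By the classical theorem on differentiating a series of $C^1$-functions term by term --- applicable because, for each fixed $(s_1,s_2)$ in the domain, the series of termwise $X$-derivatives just shown converges uniformly on compact subsets of $G$ --- one obtains $XP_T(g,s_1,s_2)=\sum_\gamma XH_T(\gamma g,s_1,s_2)$; this series is again left $\Gamma$-invariant, since the action of $X$ commutes with left translations on $G$, hence descends to $\Gamma\backslash G$, and by the domination above it is bounded and lies in $L^2(\Gamma\backslash G)$. For an arbitrary $X\in\mathfrak U(\gg_\CC)$ I would argue by induction on the filtration degree, the inductive invariant being that $XH_T(g,s_1,s_2)$ is a finite sum of terms $L(g)\,\rho(J(g,i)^{-1})\,h_T(g\cdot i,s_1-n,s_2)$ with $L$ smooth, $n$ an integer, and $\|L(g)\|=O_T\bigl(\trace(T\im(g\cdot i))^{d}\bigr)$ for some $d\ge n\ge 0$ depending on $X$. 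One checks from the first-order formulas that applying $B_{ab}$ or $(E_\pm)_{ab}$ preserves this shape: such a derivative raises $d$ by at most one --- when it falls on $h_T$ or on a matrix coefficient it introduces one more $O_T(\trace(TY))$-factor, and if it simultaneously lowers $s_1$ it does so by exactly one, so $d\ge n$ persists --- and/or post-composes one further bounded $d\rho$-operator, when it falls on $\rho(J(g,i)^{-1})$. The estimate of the first paragraph then carries over verbatim, and the differentiation theorem can be iterated.

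The two consequences for $\kappa>2m$ follow at once. The critical point $(s_1,s_2)=(0,0)$ lies in the open domain of Proposition~\ref{konvergenz_poincare}, since $\re s_2=0>m-\tfrac\kappa2$ is precisely the hypothesis $\kappa>2m$, and $\re(ms_2+s_1)=0>m^2-\tfrac12\sum_j l_j$ holds because $l_j\ge\kappa>2m$ for all $j$ forces $\sum_j l_j>2m^2$; hence $P_T$ and all its $\mathfrak U(\gg_\CC)$-derivatives converge absolutely there. Finally, putting $s_1=s_2=0$ into the formula for $(E_-)_{ab}H_T(g,s_1,s_2)$, both of its summands carry a scalar factor $2s_1$, respectively $2s_2$, which then vanishes; thus $(E_-)_{ab}H_T(\gamma g,0,0)=0$ for every $\gamma$ and all $a,b$, and therefore $E_-P_T(\,\cdot\,,0,0)=0$.

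The routine but delicate step is the bookkeeping behind the inductive invariant: one has to verify that differentiating the matrix coefficients, which depend on $g$ through both $Y=\im(g\cdot i)$ and the $U(m)$-component $k$, never produces growth exceeding the single extra power of $\trace(TY)$ permitted by the invariant. Here the explicit first-order formulas of~\cite[Lemma~7.1]{holproj} together with the decomposition of $J(g,i)$ into $\im(g\cdot i)^{-\frac12}$ times a unitary matrix (from the proof of Proposition~\ref{konvergenz_poincare}) are what keep this under control. For the application with $\kappa>2m$ no such care is needed, since the critical point then lies well inside the domain.
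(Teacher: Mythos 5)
Your proposal is correct and follows essentially the same route as the paper: read the action of the basis elements off the explicit first-order formulas, observe that every resulting term is a uniformly bounded operator (coming from $d\rho$ and the unitary $U(m)$-data) times at most one extra power of $\trace(TY)$ applied to $h_T$, and dominate the termwise-differentiated series by the scalar majorant of Theorem~\ref{altes_konvergenz_resultat}; the vanishing under $E_-$ at $(s_1,s_2)=(0,0)$ is read off from the factors $2s_1$ and $2s_2$ exactly as in the paper. The only cosmetic difference is your absorption of the extra $\trace(TY)$ into $e^{-2\pi(1-\varepsilon)\trace(TY)}$ (the paper instead tacitly shifts $s_1\mapsto s_1+1$, which stays in the convergence cone), and your explicit induction over the filtration of $\mathfrak U(\gg_\CC)$, which the paper leaves implicit.
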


\section{Functions on the Siegel upper halfspace}\label{sec-on-H}
Let $G=\Sp(m,\RR)$ be the symplectic group of genus $m$.
We identify the maximal compact subgroup $K$ (stabilizer of $i$) with the unitary group $U(m)$ by 
\begin{equation*}
 k\:=\:\begin{pmatrix} C&S\\-S&C\end{pmatrix}\:\mapsto J(k,i)\:=\: C-iS\:.
\end{equation*}
For abbreviation, let $J(g)=J(g,i)$ for $g\in G$.
Let $C^\infty(\mathcal H,V_\rho)$ be the space of $C^\infty$-functions on $\mathcal H$ with values in the space $V_\rho$, 
and let $C^\infty(G,V_\rho)=C^\infty (G)\otimes V_\rho$.
  There is a monomorphism
 \begin{eqnarray*}
  C^\infty(\mathcal H,V_\rho) &\to& C^\infty (G, V_\rho)_\tau\:,\\
  f(Z)&\mapsto& F(G)=\rho^{-1}(J(g))F(gK i)\:.  
 \end{eqnarray*}
The images have the following transformation property under $K$ 
\begin{equation*}
 F(gk)\:=\:\rho^{-1}(J(gk)) f(gkKi)\:=\: \rho^{-1}(J(k))F(g)\:,
\end{equation*}
so they belong to $C^\infty (G, V_\rho)_\tau$, the subspace of  functions 
in $C^\infty(G,V_\rho)$ on which the action of $K$ by right translations is given by $\tau=\rho^{-1}\circ J$, and the map above implies an isomorphism
\begin{equation*}
 \phi: C^\infty(\mathcal H,V_\rho)\quad\tilde\longrightarrow\quad C^\infty (G, V_\rho)_\tau\:.
\end{equation*}
In particular, we have $F(g_Z) =\rho(Y^{1/2})f(Z)$.  
Under $\phi$ the action of the anti-holomorphic differential operator $\partial_{\bar Z}$ transforms to the action of $E_-$.
\begin{prop}\label{H-translation}
Let $\rho$ be an irreducible representation of $\GL(m,\CC)$ of highest weight $l$ and absolute weight $\kappa>2m$.
 The Poincar\'e series  
 \begin{equation*}
  p_T(Z)\:=\: \sum_{\gamma\in\Gamma_\infty\backslash \Gamma} \rho(J(\gamma,Z))^{-1}e^{2\pi i\trace(T\gamma Z)}
 \end{equation*}
 converge absolutely and locally uniformly. They are square-integrable and holomorphic. In particular, they belong to the space $[\Gamma,\rho]_0$ of holomorphic cuspforms.
 \end{prop}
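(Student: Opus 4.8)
The plan is to deduce Proposition~\ref{H-translation} from the group-level results of Section~\ref{Poincare-series} via the isomorphism $\phi$. First I would observe that the summands of $p_T(Z)$ are precisely the images under $\phi^{-1}$ (composed with the normalization $F(g_Z)=\rho(Y^{1/2})f(Z)$) of the summands $H_T(g,0,0)=\rho(J(g,i)^{-1})h_T(g\cdot i,0,0)$ of the Poincar\'e series $P_T(g,0,0)$ studied in Proposition~\ref{konvergenz_poincare}. Indeed, at $(s_1,s_2)=(0,0)$ one has $h_T(Z,0,0)=e^{2\pi i\trace(TZ)}$, so $P_T(g,0,0)=\sum_{\gamma}\rho(J(\gamma g,i))^{-1}e^{2\pi i\trace(T\gamma g\cdot i)}$, and evaluating at $g=g_Z$ while stripping the factor $\rho(Y^{1/2})$ coming from $J(g_Z,i)$ recovers $p_T(Z)$ termwise, using the cocycle relation $J(\gamma g_Z,i)=J(\gamma,Z)J(g_Z,i)$.

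Given this dictionary, absolute and locally uniform convergence of $p_T(Z)$ for $\kappa>2m$ is immediate from the last assertion of Proposition~\ref{konvergenz_poincare}, which states that at the critical point $(0,0)$ the series converges absolutely (and that the norm is bounded and lies in $L^2(\Gamma\backslash G)$); transporting an $L^2$-bound on $\Gamma\backslash G$ back to $\mathcal H$ via $\phi$ and the factor $\rho(Y^{1/2})$ yields square-integrability of $p_T$ on $\Gamma\backslash\mathcal H$ against the invariant measure. For holomorphy, I would invoke Proposition~\ref{prop-derivatives}: the series may be differentiated termwise by any element of $\mathfrak U(\gg_\CC)$ with unchanged convergence, and in particular $E_-P_T(g,0,0)=\sum_\gamma (E_-)H_T(\gamma g,0,0)$. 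From the explicit formula for $(E_-)_{ab}H_T(g,s_1,s_2)$ one sees that at $s_1=s_2=0$ both terms on the right vanish, so $E_-P_T(g,0,0)=0$. Since $\phi$ intertwines $\partial_{\bar Z}$ with $E_-$, this says exactly that $p_T$ is annihilated by $\partial_{\bar Z}$, i.e.\ holomorphic. Alternatively, and perhaps more cleanly to state, each summand $\rho(J(\gamma,Z))^{-1}e^{2\pi i\trace(T\gamma Z)}$ is manifestly holomorphic in $Z$, and locally uniform convergence lets one pass holomorphy to the sum.

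It remains to check that $p_T(Z)v\in[\Gamma,\rho]_0$, i.e.\ the correct $\Gamma$-transformation law, cusp condition, and, for $m=1$, growth at the cusp. The transformation law $\rho(J(\tilde\gamma,Z))^{-1}p_T(\tilde\gamma Z)=p_T(Z)$ is the cocycle computation already recorded in the introduction just after \eqref{def_poincare_vector_valued}, so I would simply cite it. The cusp condition follows from the $L^2$-bound together with the standard Fourier-expansion argument: a square-integrable holomorphic Siegel modular form that is bounded on $\Gamma\backslash\mathcal H$ has a Fourier expansion supported on positive semidefinite $T$, and the $L^2$ (equivalently the boundedness via Proposition~\ref{konvergenz_poincare}) forces the constant terms along all rational boundary components to vanish, hence the form is a cusp form. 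The main obstacle — really the only non-formal point — is this last step: one must make precise that the group-level $L^2$-bound, uniform in $T$, translates into the vanishing of the Siegel $\Phi$-operator applied to $p_T$ along every cusp, which in the higher-rank case requires a short argument with the Fourier--Jacobi expansion rather than a single Fourier expansion. Everything else is bookkeeping through $\phi$ and direct appeals to Propositions~\ref{konvergenz_poincare} and~\ref{prop-derivatives}.
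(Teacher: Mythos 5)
Your proposal is correct and follows essentially the same route as the paper, whose proof is a one-line appeal to the identification $p_T=\phi^{-1}(P_T)$ together with Propositions~\ref{konvergenz_poincare} and~\ref{prop-derivatives}. You simply spell out the dictionary through $\phi$, the $E_-$-annihilation (or termwise holomorphy) argument, and the cusp condition in more detail than the paper does.
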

\begin{proof}[Proof of proposition~\ref{H-translation}]
 Because $p_T(s_1,s_2)=\phi^{-1}(P_T(s_1,s_2))$, this is a direct consequence of proposition~\ref{konvergenz_poincare} along with proposition~\ref{prop-derivatives}.
\end{proof}

\subsection{Petterson scalar product}
For $f,h\in[\Gamma,\rho]_0$ we define the Petterson scalar product 
\begin{equation*}
 \langle f,h\rangle\::=\:\int_{\mathcal F} f(Z)'\rho(\im Z) \overline{h(Z)}~dV_{inv}\:,
\end{equation*}
where
\begin{equation*}
 dV_{inv}\:=\: \frac{dX}{\det(Y)^{\frac{m+1}{2}}}\frac{dY}{\det(Y)^{\frac{m+1}{2}}}
\end{equation*}
is the invariant measure on $\mathcal H$. Here $dX=\prod_{i\leq j}dx_{ij}$, and likewise $dY$.
We also fix the invariant measure
\begin{equation*}
 dY_{inv}\:=\:\frac{dY}{\det(Y)^{\frac{m+1}{2}}}
\end{equation*}
on the space of positive definite matrices.
Using the isomorphism $\phi$, the Petterson scalar product equals the $L^2$-scalar product on group level if one uses the normalization $dV_{inv} dk=dg$ for the Haar measures involved.
\begin{align*}
 \langle f, h\rangle \:=\:& \int_{\mathcal F}f(Z)'\rho(\im Z)\overline{h(Z)}~dV_{inv}\\
 \:=\:& \int_{\mathcal F} F(g)'\rho(J(g))'\rho(\im Z)\overline{\rho(J(g))}\overline{H(g)}~dV_{inv}\\
 \:=\:&\int_{\Gamma\backslash G}F(g)'\overline{H(g)}~dg\\
 \:=\:&\langle\!\langle F,H\rangle\!\rangle_{L^2(\Gamma\backslash G)}\:.
\end{align*}
Here we used $Z=g\cdot i$ and the formula $\im MZ=(CZ+D)'^{-1}\im(Z)\overline{(CZ+D)^{-1}}$.
\subsection{Unfolding the Poincar\'e series}
Let $f$ be a (non-homomorphic) modular form of weight $\rho$. We have
\begin{align*}
 \langle f, P_Tv\rangle \:=\:&
 \int_\mathcal F f(Z)'\rho(\im Z)\sum_{\gamma\in\Gamma_\infty\backslash \Gamma} \overline{\rho^{-1}(J(\gamma,z))}e^{-2\pi i\trace(T\gamma \bar Z)}v~dV_{inv}\\
 \:=\:& 
 \int_\mathcal F \sum_{\gamma\in\Gamma_\infty\backslash \Gamma}\!\!f(\gamma Z)'\rho^{-1}(J(\gamma,Z))'\rho(\im Z)\overline{\rho^{-1}(J(\gamma,Z))} e^{-2\pi i\trace(T\gamma \bar Z)}v~dV_{inv}\\
 \:=\:&
 \int_{\Gamma\backslash \mathcal H}\sum_{\gamma\in\Gamma_\infty\backslash \Gamma}f(\gamma Z)'\rho(\im(\gamma Z))e^{-2\pi i\trace(T\gamma \bar Z)}v~dV_{inv}\\
\:=\:&
 \int_{\Gamma_\infty\backslash \mathcal H}f(Z)'\rho(\im Z)e^{-2\pi i\trace(T\bar Z)}v~dV_{inv}\:.
\end{align*}
More correctly, we must restrict to the case of forms of moderate growth, which means that the above integral exists.
Assuming $f$ to have Fourier expansion
\begin{equation*}
 f(Z)\:=\:\sum_{\tilde T}\rho(\tilde T^{\frac{1}{2}})a(\tilde T,\tilde T^{\frac{1}{2}}Y\tilde T^{\frac{1}{2}})e^{2\pi i\trace(\tilde TX)}\:,
\end{equation*}
(notice that the vector valued coefficients are well-defined because $\rho(\tilde T^{\frac{1}{2}})$ belongs to $\GL(V_\rho)$ and 
$a(\tilde T,\tilde T^{\frac{1}{2}}Y\tilde T^{\frac{1}{2}})$ belongs to $V_\rho$)
we calculate further
\begin{align*}
 \langle f, P_Tv\rangle \:=\:&
 \int_{Y>0}a(T,T^{\frac{1}{2}}YT^{\frac{1}{2}})'\rho(T^{\frac{1}{2}})\rho(Y)v e^{-2\pi\trace(TY)}\frac{dY_{inv}}{\det(Y)^{\frac{m+1}{2}}}\\
 \:=\:&
 \det(T)^{\frac{m+1}{2}}\int_{Y>0}a(T,Y)'\rho(Y)\rho(T^{-\frac{1}{2}})v e^{-2\pi\trace(Y)}\frac{dY_{inv}}{\det(Y)^{\frac{m+1}{2}}}\:.
\end{align*}
If $f$ is assumed to be holomorphic,  we may write for its Fourier expansion
\begin{equation*}
 f(Z)\:=\:\sum_{\tilde T}\rho(\tilde T^{1/2})a(\tilde T)e^{2\pi i\trace(\tilde TZ)}\:,
\end{equation*}
where
\begin{equation*}
 a(\tilde T)\:=\: a(\tilde T,\tilde T^\frac{1}{2}Y\tilde T^\frac{1}{2})\cdot e^{2\pi\trace(\tilde TY)}
\end{equation*}
is independent of $Y$.
Then we obtain
\begin{align*}
 \langle f, P_Tv\rangle \:=\:&
 \det(T)^{\frac{m+1}{2}}a(T)'\int_{Y>0}\rho(Y)\rho(T^{-\frac{1}{2}})v e^{-4\pi\trace(Y)}\frac{dY_{inv}}{\det(Y)^{\frac{m+1}{2}}} \:.
\end{align*}
\subsection{Sturm's operator}\label{sturm_definition}
For  Sturm's operator to reproduce holomorphic cuspforms we must normalize it such that this last expression is $a(T)'\cdot v$.
So we are in due to calculate the integrals
\begin{equation*}
\Gamma(\rho)\:=\:\int_{Y>0}\rho(Y) e^{-4\pi\trace(Y)}~dY_{inv}\:
\end{equation*}
for  varying $\rho$. 
For $\rho$ of large enough absolute weight, this Gamma integral is convergent and belongs to $\mathop{End}(V_\rho)$ (\cite{godement}). It allows analytic continuation to smaller weights.
We expect $\Gamma(\rho)$ to be invertible in general apart from a discrete set of zeros and poles and  prove this for a class of representations in section~\ref{section_gamma_integral}.

For all $\rho$ such that the following is well-defined as an element of $\GL(V_\rho)$ let
\begin{equation*}
 C(\rho)\:=\:\int_{Y>0}\rho(Y) e^{-4\pi\trace(Y)}\frac{dY_{inv}}{\det(Y)^{\frac{m+1}{2}}}\:=\:
  (4\pi)^{(m+1)-\sum_j l_j}\cdot
  \Gamma(\rho\otimes\det\nolimits^{-\frac{m+1}{2}})\:.
\end{equation*}
Then define the normalized Sturm operator by
\begin{equation*}
 \mathop{St}\nolimits_\rho(f)\:=\:\sum_{T>0}\rho(T^\frac{1}{2})b(T)e^{2\pi i\trace(TZ)}\:,
\end{equation*}
where $b(T)$ is defined by
\begin{equation*}
b(T)'\:=\: \det(T)^{-\frac{m+1}{2}}\int_{Y>0}a(T,Y)'\rho(T^\frac{1}{2})C(\rho)^{-1}\rho(Y)\rho(T^{-\frac{1}{2}}) \frac{e^{-2\pi\trace(Y)}dY_{inv}}{\det(Y)^{\frac{m+1}{2}}}\:.
\end{equation*}
Then, for holomorphic input $f$ as above and $v\in V_\rho$ we obtain $b(T)=a(T)$.
The unfolding process above proves theorem~\ref{theorem_hol_proj}. 
The assumption that  $\Gamma(\rho\otimes\det\nolimits^{-\frac{m+1}{2}})$ is an automorphism  is satisfied for example for alternating powers 
$\rho=\mathop{st}^{[q]}\det\nolimits^\kappa$, $\kappa>m-1$ (see proposition~\ref{gamma_alternating}).


\section{Phantom terms by Sturm's operator}\label{sec-sturm}
We will prove theorem~\ref{thm-phantom}. So  fix rank $m=2$.
We test Sturm's operator in case of $\rho$ being the representation of minimal $K$-type $(\kappa+1,\kappa)$.
We show that in analogy to the case of scalar weight $\kappa$ the Maass shift of cusp forms $h\in [\Gamma,(\kappa-1,\kappa-2)]_0$ produce phantom terms if 
and only if $\kappa=3$.

we have
\begin{equation*}
 C(\rho)\:=\:(4\pi)^{3-(2\kappa+1)}(\kappa-\frac{3}{2})\Gamma_2(\kappa-\frac{3}{2})\mathbf 1_2\:.
\end{equation*}
Let $c(\rho)$ be the scalar  such that $C(\rho)=c(\rho)\mathbf 1_2$.
Let $k=\kappa-2$ and let $h\in[\Gamma,\tau]_0$ be a holomorphic cuspform for $\tau=(k+1,k)$ with Fourier expansion
\begin{equation*}
 h(Z)\:=\:\sum_{T>0} \tau(T^\frac{1}{2})a(T)e^{2\pi i\trace(TZ)}\:.
\end{equation*}
Maass' shift operator is given by (see~\cite[5.1]{phantom})
\begin{equation*}
 \Delta_+^{[2]} h(Z)\:=\:(2i)^2(\tau\otimes\det\nolimits^{-\frac{1}{2}})(Y^{-1})\cdot\det(\partial_Z)\left((\tau\otimes\det\nolimits^{-\frac{1}{2}})(Y)h(Z)\right)\:.
\end{equation*}
The image of $h$ under $\Delta_+^{[2]}$ is a non-holomorphic form of weight $\tau\otimes\det^2$, i.e. $(k+3,k+2)=(\kappa+1,\kappa)$. Hence (see~\cite{phantom}), its holomorphic projection is zero $\mathop{pr}_{hol}(\Delta_+^{[2]}(h))=0$.
We show that Sturm's operator $\mathop{St}_{\tau\otimes\det^2}(\Delta_+^{[2]}(h))$ is non-zero if and only if $k=1$.
 For to apply Maass' operator to $h$ it is enough to apply it to $e^{2\pi i\trace(TZ)}$.
 Here $(\tau\otimes\det\nolimits^{-\frac{1}{2}})(Y)=\det(Y)^{k-\frac{1}{2}} Y$. Let $f(Z)=\det(Y)^{k-\frac{1}{2}}e^{2\pi i\trace(TZ)}$ and $g(Z)=Y$.
By  \cite[p.~211]{freitag}  we have
\begin{equation*}
 \det(\partial_Z)(f\cdot g)\:=\:\det(\partial_Z)(f)\cdot g +2(\partial_Z(f)\sqcap \partial_Z(g))+f\cdot\det(\partial_Z)(g)\:.
\end{equation*}
Here the last term is zero, because $\det(\partial_Z)$ is a differential operator of homogeneous degree two and $g(Z)=Y$ is of degree one.
For the first term we obtain following~\cite[5.2]{phantom}
\begin{eqnarray*}
 \det(\partial_Z)(f(Z))\cdot Y&=& -\frac{1}{4}k(k-\frac{1}{2})\det(Y)^{k-\frac{3}{2}}e^{2\pi i\trace(TZ)}\cdot Y\\
 &&-\frac{i}{2}(k-\frac{1}{2})(2\pi i)\trace(TY)\det(Y)^{k-\frac{3}{2}}e^{2\pi i\trace(TZ)}\cdot Y\\
 &&+(2\pi i)^2\det(Y)^{k-\frac{1}{2}}\det(T)e^{2\pi i\trace(TZ)}\cdot Y\:.
\end{eqnarray*}
For the second term we find
\begin{equation*}
 \partial_Z(f(Z))\:=\:-\frac{i}{2}(k-\frac{1}{2})\det(Y)^{k-\frac{1}{2}}e^{2\pi i\trace(TZ)}\cdot Y^{-1} +2\pi i\det(Y)^{k-\frac{1}{2}}e^{2\pi i\trace(TZ)}\cdot T\:,
\end{equation*}
and $\partial_Z (Y_{jk})=-\frac{i}{2}X^{(jk)}$.
Here $X^{(jk)}=\frac{1}{2}(e_{jk}+e_{kj})$.
So the second  term $2(\partial_Z(f(Z))\sqcap \partial_Z(g(Z))$  equals
\begin{eqnarray*}
&&
 \sum_{j,k} \left(-\frac{1}{4}(k-\frac{1}{2})\det(Y)^{k-\frac{3}{2}} e^{2\pi i\trace(TZ)}2(Y^{-1}\sqcap X^{(jk)} )\cdot X^{(jk)}\right.\\
 &&\hspace*{2cm}
 \left.
 +\quad \pi\det(Y)^{k-\frac{1}{2}}e^{2\pi i\trace(TZ)}\cdot 2(T\sqcap X^{(jk)})\cdot X^{(jk)}\right)\:,\\
\end{eqnarray*}
which by definition of $\sqcap$-multiplication (\cite[p.~207]{freitag}) is
\begin{equation*}
  -\frac{1}{4}(k-\frac{1}{2})\det(Y)^{k-\frac{3}{2}}e^{2\pi i\trace(TZ)}\cdot Y +\pi\det(Y)^{k-\frac{1}{2}}e^{2\pi i\trace(TZ)}\det(T)\cdot T^{-1}\:.
\end{equation*}
Altogether we obtain
\begin{align*}
 \Delta_+^{[2]}(e^{2\pi i\trace(TZ)})
 \:=\:& (k+1)(k-\frac{1}{2})\frac{1}{\det(Y)}e^{2\pi i\trace(TZ)}\cdot \mathbf 1_2 \\
 & 
 -4\pi (k-\frac{1}{2})\frac{\trace(TY)}{\det(Y)}e^{2\pi i\trace(TZ)}\cdot \mathbf 1_2\\
 &
 +(4\pi)^2\det(T)e^{2\pi i\trace(TZ)}\cdot \mathbf 1_2\\
 &
 -4\pi\det(T)e^{2\pi i\trace(TZ)}\cdot(TY)^{-1}\:.
\end{align*}
The Fourier coefficients of 
\begin{equation*}
 \tilde h(Z)\:=\:\Delta_+^{[2]}(h)\:=\: \sum_{T>0} \rho(T^\frac{1}{2})a(T,T^\frac{1}{2}YT^\frac{1}{2})e^{2\pi i\trace(TX)}\:
\end{equation*}
are given by $\rho(T^\frac{1}{2})a(T,T^\frac{1}{2}YT^\frac{1}{2})$, which equal
\begin{align*}
 &e^{-2\pi\trace(TY)}\cdot\Bigl(\frac{(k-\frac{1}{2})(k+1)}{\det(Y)}-4\pi (k-\frac{1}{2})\frac{\trace(TY)}{\det(Y)}
 +(4\pi)^2\det(T)\Bigr)\cdot 
\tau(T^\frac{1}{2})a(T)\\
 &- e^{-2\pi\trace(TY)}\cdot 4\pi\det(T)\cdot Y^{-1}T^{-1}\tau(T^\frac{1}{2})a(T)\:,
\end{align*}
respectively $a(T,Y)$ given by
\begin{eqnarray*}
  &&e^{-2\pi\trace(Y)}\cdot\left(\frac{(k-\frac{1}{2})(k+1)}{\det(Y)}-4\pi (k-\frac{1}{2})\frac{\trace(Y)}{\det(Y)}+(4\pi)^2\right)\cdot 
  \rho(T^{-\frac{1}{2}})a(T)\\
 &&-  e^{-2\pi\trace(Y)}\cdot4\pi\cdot Y^{-1}a(T)\:.
 \end{eqnarray*}
 Accordingly, for to compute Sturm's operator we evaluate the sum of the following terms up to the factor $\det(T)^{-\frac{1}{2}}c(\rho)^{-1}\cdot a(T)'$. First,
 \begin{equation*}
  (k-\frac{1}{2})(k+1) \int_{Y>0}Y\det(Y)^{k+2+s-\frac{5}{2}}e^{-4\pi\trace(Y)}~dY_{inv}\cdot 
 \end{equation*}
 which by Proposition~\ref{gamma_standard} equals
 \begin{equation}\label{sturm_term_1}
  (k-\frac{1}{2})(k+1)(4\pi)^{-2(s+k)}(s+k-\frac{1}{2})\Gamma_2(s+k-\frac{1}{2})\mathbf 1_2\:.
 \end{equation}
 Second,
 \begin{equation*}
  -4\pi(k-\frac{1}{2}) \int_{Y>0}Y\trace(Y)\det(Y)^{k+2+s-\frac{5}{2}}e^{-4\pi\trace(Y)}~dY_{inv}
 \end{equation*}
which by Lemma~\ref{polynoimal-integrals} equals
\begin{equation}\label{sturm_term_2}
 -2(k-\frac{1}{2})(4\pi)^{-2(s+k)}(s+k-\frac{1}{2})(s+k)\Gamma_2(s+k-\frac{1}{2})\mathbf 1_2\:.
\end{equation}
Third,
\begin{equation*}
 (4\pi)^2 \int_{Y>0}Y\det(Y)^{k+2+s-\frac{3}{2}}e^{-4\pi\trace(Y)}~dY_{inv}
\end{equation*}
which by Proposition~\ref{gamma_standard} equals
\begin{equation}\label{sturm_term_3}
 (4\pi)^{-2(s+k)}(s+k+\frac{1}{2})\Gamma_2(s+k+\frac{1}{2})\mathbf 1_2\:.
\end{equation}
And
\begin{equation}\label{sturm_term_4}
4\pi \cdot\!\int_{Y>0}\!\!\mathbf 1_2\det(Y)^{k+2+s-\frac{3}{2}}e^{-4\pi\trace(Y)}~dY_{inv}\:=\:(4\pi)^{-2(s+k)}\Gamma_2(s+k+\frac{1}{2})\mathbf 1_2\:.
\end{equation}
According to (\ref{sturm_term_1})--(\ref{sturm_term_4}),  Sturm's operator  applied to $\Delta_+^{[2]}h(Z)$ is given in terms of coefficients by the limit $\lim_{s\to 0} b(T,s)$,
where 
\begin{equation*}
 b(T,s)\:=\:\det(T)^{-\frac{3}{2}}c(\rho)^{-1}(4\pi)^{-2(s+k)}\frac{s^2-\frac{s}{2}}{(s+k-1)}\Gamma_2(s+k+\frac{1}{2})a(T)\:.
\end{equation*}
Here we used the identity $(s+k-\frac{1}{2})\Gamma_2(s+k-\frac{1}{2})=(s+k-1)^{-1}\Gamma_2(s+k+\frac{1}{2})$.
The limit 
\begin{equation*}
 \lim_{s\to 0} b(T,s)\:=\: (4\pi)^2\det(T)^{-\frac{3}{2}} \cdot \lim_{s\to 0}\frac{s^2-\frac{s}{2}}{s+k-1}\cdot a(T)
 \end{equation*}
 is zero in all cases $k>1$, and equals 
\begin{equation*}
 b(T)\:=\: -\frac{(4\pi)^2}{2}\det(T)^{-\frac{3}{2}} \cdot a(T)
\end{equation*}
in case $k=1$. So Sturm's operator  applied to $\Delta_+^{[2]}h(Z)$ is non-zero exactly in case   $\rho=(\kappa+1,\kappa)$ with $\kappa=3$, 
which is the minimal $K$-type of the holomorphic discrete series representation of Harish-Chandra parameter $(4,3)-(1,2)=(3,1)$.



\section{Gamma integrals}\label{section_gamma_integral}
For an irreducible finite dimensional representation $\rho$ of $\GL(m,\CC)$ of absolute weight $\kappa$ we are interested in the $\mathop{End}(V_\rho)$-valued integral
\begin{equation*}
\Gamma(\rho)\:=\: \int_{Y>0}\rho(Y)e^{-\trace(Y)}dY_{inv}\:.
\end{equation*}
 Introducing a factor $\det(Y)^s$ the integral
\begin{equation*}
 \Gamma(\rho\otimes\det\nolimits^s)\:=\: \int_{Y>0}\rho(Y)\det(Y)^se^{-\trace(Y)}dY_{inv}\:
\end{equation*}
exists for $\re s+\kappa>\frac{m-1}{2}$ (\cite{godement}). 
We denote by $\Gamma(\rho)$ its  meromorphic continuation to $s=0$.
Let
\begin{equation*}
 \Gamma_m(s)\:=\:\pi^{\frac{m(m-1)}{4}}\prod_{\nu=0}^{m-1}\Gamma(s-\frac{\nu}{2})
\end{equation*}
denote the classical Gamma function of level $m$ which for $\re s>\frac{m-1}{2}$ is given by the integral 
\begin{equation*}
 \Gamma_m(s)\:=\:\int_{Y>0}\det(Y)^se^{-\trace(Y)}dY_{inv}\:.
\end{equation*}
In particular we have $\Gamma(\det\nolimits^s)=\Gamma_m(s)$.
An important property of the operator integrals is their $\SO(m)$-equivariance.
\begin{lem}\label{orthogonale_invarianz}
 The  integral $\Gamma(\rho)$ is invariant under orthogonal transformations
 \begin{equation*}
 \Gamma(\rho)\:=\: \rho(k') \Gamma(\rho) \rho(k)\:,
 \end{equation*}
for all $k\in \SO(m,\RR)\subset U(m)$.
\end{lem}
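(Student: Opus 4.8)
The plan is to run the change of variables $Y\mapsto k'Yk$ on the cone $\{Y>0\}$ of positive definite symmetric matrices, for a fixed $k\in\SO(m,\RR)$. First I would record the invariances of the three ingredients of the integrand under this substitution: the exponent is unchanged, since $\trace(k'Yk)=\trace(Ykk')=\trace(Y)$; the determinant is unchanged, $\det(k'Yk)=\det(k)^2\det(Y)=\det(Y)$; and the measure $dY_{inv}=dY/\det(Y)^{(m+1)/2}$ is preserved. For the measure I would use that the linear map $Y\mapsto gYg'$ on the $\binom{m+1}{2}$-dimensional space of symmetric matrices has determinant $(\det g)^{m+1}$, so for $g=k$ it preserves Lebesgue measure $dY$, while $\det(Y)$ is invariant as just noted; indeed $dY_{inv}$ is invariant under $Y\mapsto gYg'$ for every $g\in\GL(m,\RR)$. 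The map $Y\mapsto k'Yk$ is moreover a bijection of $\{Y>0\}$ onto itself.

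Since $\rho$ is a homomorphism and $Y$, $k$, $k'$ all lie in $\GL(m,\RR)\subset\GL(m,\CC)$, we have $\rho(k'Yk)=\rho(k')\rho(Y)\rho(k)$. Substituting in the defining integral then gives
\begin{equation*}
 \Gamma(\rho)\:=\:\int_{Y>0}\rho(k'Yk)e^{-\trace(k'Yk)}~dY_{inv}\:=\:\rho(k')\Bigl(\int_{Y>0}\rho(Y)e^{-\trace(Y)}~dY_{inv}\Bigr)\rho(k)\:=\:\rho(k')\Gamma(\rho)\rho(k)\:.
\end{equation*}
To make this rigorous one should first carry it out for $\Gamma(\rho\otimes\det\nolimits^s)$ on the half-plane $\re s+\kappa>\frac{m-1}{2}$, where the integral converges absolutely so the change of variables is legitimate (the extra factor $\det(Y)^s$ is invariant as well, since $\det k=1$, equivalently $(\rho\otimes\det\nolimits^s)(k)=\rho(k)$). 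This yields $\Gamma(\rho\otimes\det\nolimits^s)=\rho(k')\Gamma(\rho\otimes\det\nolimits^s)\rho(k)$ on that half-plane, and the identity at $s=0$ follows because both sides are meromorphic in $s$ and agree on an open set.

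There is no genuine obstacle here; the argument is a one-line substitution. The only two points deserving a word are the Jacobian of $Y\mapsto k'Yk$ on the space of symmetric matrices — which equals $(\det k)^{m+1}=1$ by the standard formula for $Y\mapsto gYg'$, checked e.g. by diagonalizing $g$ — and the remark that an identity valid wherever the defining integral converges absolutely is automatically inherited by its meromorphic continuation.
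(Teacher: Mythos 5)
Your proof is correct and follows essentially the same route as the paper: the substitution $Y\mapsto k'Yk$ applied first to $\Gamma(\rho\otimes\det^s)$ in its half-plane of convergence, followed by uniqueness of meromorphic continuation. You simply spell out the Jacobian and invariance checks that the paper leaves implicit.
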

\begin{proof}[Proof of Lemma~\ref{orthogonale_invarianz}]
 For $k\in \SO(m)$ we have
 \begin{equation*}
 \Gamma(\rho\otimes\det\nolimits^s)\:=\: \int_{Y>0}\rho(k'Yk)\det\nolimits^s(k'Yk) e^{-\trace(k'Yk)}~dY_{\mathop{inv}}\:=\: \rho(k')\Gamma_m(\rho\otimes\det\nolimits^s)\rho(k)\:.
 \end{equation*}
 By the uniqueness of meromorphic continuation, this also holds for $\Gamma(\rho)$.
\end{proof}
\subsection{Alternating powers}\label{sec-alt}
\begin{prop}\label{gamma_alternating}\label{gamma_standard}
For $q=1,\dots,m$,
let $\mathop{st}^{[q]}$ be the $q$-th alternating power of the  standard representation of $\GL(m,\CC)$, i.e. the irreducible representation of highest weight  $(1,\dots,1,0,\dots,0)$, 
where the number of ones is $q$.
Define the polynomial $C_{[q]}(x)=x(x+\frac{1}{2})\cdots(x+\frac{q-1}{2})$.
The automorphism-valued function
 \begin{equation*}
  \Gamma(\mathop{st}\nolimits^{[q]}\otimes\det\nolimits^s)\:=\:\int_{Y>0}Y^{[q]}\det(Y)^se^{-\trace(Y)}dY_{inv}\:=\: (-1)^qC_{[q]}(-s)\Gamma_m(s)\cdot\id_{\mathop{st}^{[q]}}
 \end{equation*}
 is holomorphic on $\re s>\frac{m-1}{2}$ and has  meromorphic continuation to the complex plane,  the pole behavior being that of the scalar function $C_{[q]}(-s)\Gamma_m(s)$.
\end{prop}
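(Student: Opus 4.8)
The plan is to establish the identity for $\re s$ large, where the integral converges, and then invoke uniqueness of meromorphic continuation; the substance is to show that the integral is a scalar operator and to identify the scalar. For scalarity I would note that the proof of Lemma~\ref{orthogonale_invarianz} uses nothing about $\det k$: the substitution $Y\mapsto k'Yk$ preserves $dY_{inv}$, $\det(Y)$ and $\trace(Y)$ for \emph{every} $k\in O(m,\RR)$, so in fact $\Gamma(\mathop{st}^{[q]}\otimes\det\nolimits^s)=\rho(k')\,\Gamma(\mathop{st}^{[q]}\otimes\det\nolimits^s)\,\rho(k)$ for all $k\in O(m,\RR)$, with $\rho=\mathop{st}^{[q]}$, i.e. this operator commutes with $\rho(O(m,\RR))$. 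Now $\Lambda^q(\CC^m)$ is irreducible as an $O(m,\CC)$-module — it is the fundamental $\GL(m,\CC)$-representation of highest weight $(1^q,0^{m-q})$, and although at $q=m/2$ it decomposes under $\SO(m,\CC)$ the two summands are interchanged by the orientation-reversing elements — and $O(m,\RR)$ is Zariski dense in $O(m,\CC)$, so $\mathop{st}^{[q]}$ is already irreducible over $O(m,\RR)$. By Schur's lemma $\Gamma(\mathop{st}^{[q]}\otimes\det\nolimits^s)=c_q(s)\cdot\id$ for a scalar $c_q(s)$, holomorphic on $\re s>\frac{m-1}{2}$ by convergence of the Godement integral there (\cite{godement}).

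To compute $c_q(s)$ I would pair the operator against the basis vector $e_1\wedge\cdots\wedge e_q$, which turns $\langle\rho(Y)v,v\rangle$ into the leading principal $q\times q$ minor $\det(Y_{[q]})$, giving $c_q(s)=\int_{Y>0}\det(Y_{[q]})\det(Y)^se^{-\trace(Y)}\,dY_{inv}$. Decomposing $Y=\left(\begin{smallmatrix}A&B\\ B'&C\end{smallmatrix}\right)$ into blocks of sizes $q$ and $m-q$ and substituting the Schur complement $W=C-B'A^{-1}B$ (a measure-preserving translation of $C$ for fixed $A$, $B$) replaces the region $Y>0$ by $\{A>0,\ W>0,\ B\ \text{arbitrary}\}$, with $\det(Y)=\det(A)\det(W)$ and $\trace(Y)=\trace(A)+\trace(W)+\trace(B'A^{-1}B)$. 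The $B$-integral is Gaussian and contributes $\pi^{q(m-q)/2}\det(A)^{(m-q)/2}$; the $W$-integral evaluates to $\Gamma_{m-q}(s-\tfrac q2)$; and the $A$-integral, whose integrand carries an extra factor $\det(A)$ from the minor, evaluates to $\Gamma_q(s+1)$. Thus $c_q(s)=\pi^{q(m-q)/2}\,\Gamma_q(s+1)\,\Gamma_{m-q}(s-\tfrac q2)$.

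It remains to bring this into the stated form. Applying $\Gamma(x+1)=x\,\Gamma(x)$ to the $q$ factors of $\Gamma_q(s+1)=\pi^{q(q-1)/4}\prod_{\nu=0}^{q-1}\Gamma(s+1-\tfrac\nu2)$ pulls out the polynomial $\prod_{\nu=0}^{q-1}(s-\tfrac\nu2)=(-1)^qC_{[q]}(-s)$; the surviving Gamma factors of $\Gamma_q(s+1)\,\Gamma_{m-q}(s-\tfrac q2)$ reassemble into $\prod_{\nu=0}^{m-1}\Gamma(s-\tfrac\nu2)$, and the three powers of $\pi$ add up to $\pi^{m(m-1)/4}$ since $\tfrac{q(m-q)}{2}+\tfrac{q(q-1)}{4}+\tfrac{(m-q)(m-q-1)}{4}=\tfrac{m(m-1)}{4}$; altogether $c_q(s)=(-1)^qC_{[q]}(-s)\,\Gamma_m(s)$. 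Both sides are meromorphic and agree for $\re s>\frac{m-1}{2}$, hence on all of $\CC$, and since $C_{[q]}(-s)$ is entire the poles are exactly those of $\Gamma_m(s)$ not cancelled by it. I expect the only genuinely delicate point to be the scalarity step: one must use the full orthogonal group rather than $\SO(m)$ to cover the exceptional value $q=m/2$, where $\mathop{st}^{[q]}$ is $\SO(m)$-reducible; the two integral evaluations are then routine bookkeeping with the invariant measure $dY_{inv}$ and the functional equation of $\Gamma_m$.
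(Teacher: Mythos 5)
Your proof is correct, but it takes a genuinely different route from the paper's. The paper obtains the whole operator identity in one stroke: it differentiates the master formula $\int_{Y>0}\det(Y)^se^{-\trace(TY)}dY_{inv}=\det(T)^{-s}\Gamma_m(s)$ by the minor operator $\partial_T^{[q]}$, quotes the classical formula $\partial_T^{[q]}\det(T)^{-s}=C_{[q]}(-s)T^{-[q]}\det(T)^{-s}$ from \cite{freitag}, and sets $T=\mathbf 1_m$; scalarity then comes for free from $T^{-[q]}\mid_{T=\mathbf 1_m}=\id$, with no representation theory needed. You instead split the problem into (i) scalarity via Schur's lemma, correctly upgrading Lemma~\ref{orthogonale_invarianz} from $\SO(m)$ to $O(m)$ to handle the $\SO(m,\CC)$-reducible case $q=m/2$ --- a point you rightly flag as the only delicate one, and which the paper never has to confront --- and (ii) an explicit evaluation of the single matrix entry $\int_{Y>0}\det(Y_{[q]})\det(Y)^se^{-\trace(Y)}dY_{inv}$ by the Schur-complement factorization of the Siegel cone, giving $\pi^{q(m-q)/2}\Gamma_q(s+1)\Gamma_{m-q}(s-\tfrac q2)$; your bookkeeping reassembling this into $(-1)^qC_{[q]}(-s)\Gamma_m(s)$ checks out, including the exponent identity $\tfrac{q(m-q)}{2}+\tfrac{q(q-1)}{4}+\tfrac{(m-q)(m-q-1)}{4}=\tfrac{m(m-1)}{4}$. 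What each approach buys: the paper's differentiation argument is shorter and yields the more general identity with arbitrary $T$ (i.e.\ the factor $T^{-[q]}\det(T)^{-s}$), which is what is actually used elsewhere via equation (\ref{gamma_grundformel}); yours is self-contained modulo standard facts (no appeal to Freitag's minor-derivative formula), makes the origin of the polynomial $C_{[q]}$ transparent as the shift $\Gamma_q(s+1)/\Gamma_q(s)$, and illustrates a block-decomposition technique that could in principle attack $\Gamma(\rho)$ for other $\rho$ where no differentiation trick is available.
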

\begin{proof}[Proof of Proposition~\ref{gamma_alternating}]
 For a symmetric positive definite matrix $T$ it holds
 \begin{equation}\label{gamma_grundformel}
  \int_{Y>0}\det(Y)^se^{-\trace(TY)}dY_{inv}\:=\:\det(T)^{-s}\Gamma_m(s)\:.
 \end{equation}
 Differentiating  both sides by $\partial_T^{[q]}$ we obtain (\cite[p. 210, p. 213]{freitag})
 \begin{equation*}
  (-1)^q\int_{Y>0}Y^{[q]}\det(Y)^se^{-\trace(TY)}dY_{inv}\:=\:C_{[q]}(-s)T^{-[q]}\det(T)^{-s}\Gamma_m(s)\:.
 \end{equation*}
Evaluating  at $T=\mathbf 1_m$ yields
\begin{equation*}\label{integral_mit_Y_eintraegen}
  \int_{Y>0}Y^{[q]}\det(Y)^se^{-\trace(Y)}dY_{inv}\:=\:(-1)^qC_{[q]}(-s)\Gamma_m(s)\cdot\id_{\mathop{st}^{[q]}}\:.\qedhere
 \end{equation*}
\end{proof}
By substitution $s'=s-\kappa$, Proposition~\ref{gamma_alternating} determines $\Gamma(\rho)$ for the representations $\rho=\mathop{st}^{[q]}\otimes\det\nolimits^\kappa$. 
The computation for general $\rho$ may be obtained by chasing Young tableaux, but for  rank $m>2$ we don't obtain an instructive  general formula.
This combinatorial aspect becomes visible in the formulas for $m=2$ by the involved triangle numbers $a_{n,m}$ defined in Proposition~\ref{triangle_numbers}.
\subsection{Rank two}\label{sec-Gamma-rank-2}
For  a general formula for $\Gamma(\rho\otimes\det^{-s})$ for the irreducible representations $\rho$ of $\GL(2,\CC)$ of highest weights $(r,0)$, we need some preparations.
\begin{prop}\label{triangle_numbers}
Define the following triangle numbers $a_{n,m}$ for $n,m\in\NN_0$. Let $a_{n,0}=1$ for all $n\in\NN_0$, and let $a_{0,m}=0$ for all $m>0$.
For $n,m>0$ define by recursion
 \begin{equation*}
  a_{n,m}\:=\:\bigl(n-2(m-1)\bigr)\cdot a_{n-1,m-1}+a_{n-1,m}\:.
 \end{equation*}
 The triangle numbers have the following properties.
 \begin{enumerate}
  \item [(i)]
  $a_{n,1}\:=\:\frac{1}{2}n(n+1)$.
  \item [(ii)]
  $a_{n,2}\:=\:\frac{1}{8}n(n+1)(n-1)(n-2)$.
 \item [(iii)]
  $a_{n,m}=0$ for all $m>\lfloor\frac{n+1}{2}\rfloor$ (Gauss brackets).
 \item [(iv)]
$a_{2\nu-1,\nu}\:=\:a_{2(\nu-1),\nu-1}$.
 \end{enumerate}
\end{prop}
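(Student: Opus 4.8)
The plan is to treat (i), (ii), (iii), (iv) as four separate claims, all proved by induction on $n$ using only the defining recursion $a_{n,m}=(n-2(m-1))a_{n-1,m-1}+a_{n-1,m}$ together with the boundary data $a_{n,0}=1$, $a_{0,m}=0$ for $m>0$. For (i) and (ii), the induction is entirely routine: assuming the closed formula for $a_{n-1,1}$ (resp. $a_{n-1,2}$), substitute it and the already-established value of $a_{n-1,0}$ (resp. $a_{n-1,1}$) into the recursion and simplify the resulting polynomial in $n$. One checks the base cases $a_{0,1}=0=\tfrac12\cdot0\cdot1$ and $a_{1,2}=0$, $a_{2,2}=0$, $a_{3,2}=0$, $a_{4,2}=\tfrac18\cdot4\cdot5\cdot3\cdot2=15$, which match. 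For (iii), I would induct on $n$: if $m>\lfloor\tfrac{n+1}{2}\rfloor$ then in the recursion we have $m-1>\lfloor\tfrac{n+1}{2}\rfloor-1\ge\lfloor\tfrac{n-1}{2}\rfloor\ge\lfloor\tfrac{(n-1)+1}{2}\rfloor$ whenever $n$ is even, forcing $a_{n-1,m-1}=0$, and also $m>\lfloor\tfrac{n+1}{2}\rfloor\ge\lfloor\tfrac{n}{2}\rfloor=\lfloor\tfrac{(n-1)+1}{2}\rfloor$, forcing $a_{n-1,m}=0$; the parity bookkeeping needs a small case split on $n$ even versus odd, but is elementary.

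The one genuinely substantive item is (iv), $a_{2\nu-1,\nu}=a_{2(\nu-1),\nu-1}$. Here I would again use the recursion, this time applied to the left-hand side: $a_{2\nu-1,\nu}=(2\nu-1-2(\nu-1))\,a_{2\nu-2,\nu-1}+a_{2\nu-2,\nu}=1\cdot a_{2\nu-2,\nu-1}+a_{2\nu-2,\nu}$. Thus (iv) is equivalent to the vanishing $a_{2\nu-2,\nu}=0$, which is an instance of (iii): indeed $\nu>\lfloor\tfrac{(2\nu-2)+1}{2}\rfloor=\lfloor\nu-\tfrac12\rfloor=\nu-1$. So (iv) reduces immediately to (iii) once the coefficient $2\nu-1-2(\nu-1)=1$ is observed — the point being that $n=2m-1$ is exactly the diagonal on which this coefficient equals $1$.

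I expect the main (very mild) obstacle to be the Gauss-bracket arithmetic in (iii): keeping track of $\lfloor\tfrac{n+1}{2}\rfloor$ versus $\lfloor\tfrac{n}{2}\rfloor$ as $n$ changes parity, so that both terms in the recursion are seen to vanish under the inductive hypothesis. Everything else is a direct symbolic verification. Concretely I would organize the write-up as: first record the base values and prove (iii) by the parity induction; then derive (iv) as the special case just described; then prove (i) and (ii) by plugging the closed forms into the recursion and checking the polynomial identity in $n$ together with the stated base cases.
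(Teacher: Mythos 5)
Your overall strategy --- induction on $n$ throughout, with (iv) reduced to (iii) by observing that the coefficient $n-2(m-1)$ equals $1$ on the diagonal $n=2m-1$ --- is exactly the paper's, and your treatment of (i), (ii) and (iv) is fine. But your argument for (iii) has a genuine gap, and it sits precisely at the one nontrivial case. The chain $m-1>\lfloor\frac{n+1}{2}\rfloor-1\ge\lfloor\frac{n-1}{2}\rfloor\ge\lfloor\frac{n}{2}\rfloor$ that you invoke ``whenever $n$ is even'' is false for even $n$: there $\lfloor\frac{n-1}{2}\rfloor=\frac{n}{2}-1<\frac{n}{2}=\lfloor\frac{n}{2}\rfloor$. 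Concretely, for $n=2k$ and $m=k+1$ (the smallest $m$ exceeding $\lfloor\frac{n+1}{2}\rfloor=k$) you get $m-1=k=\lfloor\frac{n}{2}\rfloor$, which does \emph{not} exceed the inductive threshold, so the inductive hypothesis says nothing about $a_{n-1,m-1}=a_{2k-1,k}$ --- and that entry is nonzero in general (e.g.\ $a_{3,2}=3$, not $0$ as you list; your own formula in (ii) also gives $3$ there, and indeed $a_{2k-1,k}=(2k-1)!!$ by Proposition~\ref{a_2n-1,n-coefficient}). The case is rescued not by the inductive hypothesis but by the coefficient: $a_{2k,k+1}=(2k-2k)\,a_{2k-1,k}+a_{2k-1,k+1}=0\cdot a_{2k-1,k}+0$. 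This vanishing of $n-2(m-1)$ at the boundary is the one substantive idea in the proof of (iii) (the mirror image of the coefficient-equals-$1$ observation you correctly exploit for (iv)), and it is exactly how the paper argues: the right-hand side of the recursion vanishes by induction for all $m>\lfloor\frac{n}{2}\rfloor+1$, and the single remaining case $n=2k$, $m=k+1$ is killed by the factor $2k-2k=0$. With that correction your proof coincides with the paper's.
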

We will be specially interested in the numbers $a_{2\nu-1,\nu}$, for which we give an explicit formula in Proposition~\ref{a_2n-1,n-coefficient}.
\begin{proof}[Proof of proposition~\ref{triangle_numbers}.]
Obviously, $a_{1,1}=a_{0,0}+a_{0,1}=1$. Assuming  $a_{n-1,1}=\frac{1}{2}n(n-1)$, we obtain property (i) for $n$ by induction and the  recursion formula
\begin{equation*}
 a_{n,1}\:=\:n\cdot a_{n-1,0}+a_{n-1,1}\:=\:\frac{1}{2}n(n+1)\:.
\end{equation*}
Property (iii) holds  for $n=0$ by definition, and by induction the right hand side of the recursion formula is zero for all $m>\lfloor\frac{n}{2}\rfloor+1$. So the single case left to check is
that of even $n=2k$ and $m=k+1$. But here the recursion yields $a_{2k,k+1}=(2k-2k)a_{2k-1,k}+a_{2k-1,k+1}=0$.
Property (ii) is also obtained by induction using (i) and (iii).
For property (iv) notice that by (iii) $a_{n-1,\frac{n+1}{2}}=0$ for odd $n$, so the recursion formula yields $a_{n,\frac{n+1}{2}}=a_{n-1,\frac{n-1}{2}}$.
\end{proof}
\begin{lem}\label{det-ableitungen}
 Let $T$ be a  symmetric two-by-two matrix variable and denote by $\partial_{ij}=\frac{1+\delta_{ij}}{2}\partial_{T_{ij}}$ the normalized partial derivatives.
 For all $n>0$ the derivatives of the function $\det(T)^{-s}$ are given by
 \begin{equation*}
  \partial_{jj}^{(n)}(\det(T)^{-s})\:=\:(-1)^nT_{ii}^n\det(T)^{-(s+n)}\prod_{l=0}^{n-1}(s+l)\:,
  \end{equation*}
 for $\{i,j\}=\{1,2\}$,
 and 
\begin{equation*}
  \partial_{12}^{(n)}(\det(T)^{-s})\:=\:\sum_{k=0}^{n-1}2^{-k}a_{n-1,k}\det(T)^{-(s+n-k)}T_{12}^{n-2k}\cdot \prod_{l=0}^{n-k-1}(s+l)\:,
 \end{equation*}
 where the  numbers $a_{n,m}$ are defined in Proposition~\ref{triangle_numbers}.
Further,
 \begin{align*}
  \partial_{11}^{(n_1)}\partial_{22}^{(n_2)}(\det(T)^{-s})=&\sum_{k=0}^{\min\{n_1,n_2\}}k!\binom{n_1}{k}\binom{n_2}{k}(-1)^{n_1+n_2+k}T_{11}^{n_2-k}T_{22}^{n_1-k}\times\\
  &\hspace*{10mm}\times
  \det(T)^{-(s+n_1+n_2-k)}\cdot\prod_{l=0}^{n_1+n_2-k-1}(s+l)\:.
 \end{align*}
 \end{lem}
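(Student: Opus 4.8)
The plan is to prove all three identities by induction on the order of differentiation, deducing the mixed formula from the pure diagonal one rather than re-deriving it from scratch. Throughout I use the normalization $\partial_{11}=\partial_{T_{11}}$, $\partial_{22}=\partial_{T_{22}}$, $\partial_{12}=\tfrac12\partial_{T_{12}}$ together with the first-order identities $\partial_{11}\det(T)=T_{22}$, $\partial_{22}\det(T)=T_{11}$, $\partial_{12}\det(T)=-T_{12}$, and hence, for $\{i,j\}=\{1,2\}$ and arbitrary exponents $a,p$, $\partial_{jj}(\det(T)^{-a})=-a\,T_{ii}\det(T)^{-(a+1)}$, $\partial_{12}(\det(T)^{-a})=a\,T_{12}\det(T)^{-(a+1)}$, and $\partial_{12}(T_{12}^{p})=\tfrac{p}{2}T_{12}^{p-1}$.

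For the pure diagonal derivatives, since $T_{ii}$ does not depend on $T_{jj}$, applying $\partial_{jj}$ to the proposed value $(-1)^nT_{ii}^{n}\det(T)^{-(s+n)}\prod_{l=0}^{n-1}(s+l)$ only touches the determinant factor and immediately reproduces the same shape with $n$ replaced by $n+1$; the base case $n=1$ is the identity $\partial_{jj}(\det(T)^{-s})=-s\,T_{ii}\det(T)^{-(s+1)}$. For $\partial_{12}^{(n)}$ I induct in the same spirit, starting from the case $n=1$ (which reads $s\,T_{12}\det(T)^{-(s+1)}$, using $a_{0,0}=1$). Applying $\partial_{12}$ term by term to $\sum_{k=0}^{n-1}2^{-k}a_{n-1,k}\det(T)^{-(s+n-k)}T_{12}^{n-2k}\prod_{l=0}^{n-k-1}(s+l)$, the product rule splits each summand into a piece where $\partial_{12}$ hits the determinant — raising its exponent and absorbing $(s+n-k)$ via $\prod_{l=0}^{n-k-1}(s+l)\cdot(s+n-k)=\prod_{l=0}^{n-k}(s+l)$ — and a piece where it hits $T_{12}^{n-2k}$, producing a factor $\tfrac12(n-2k)$. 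Re-indexing the second family by $k\mapsto k+1$ and collecting, the coefficient of the term with determinant exponent $-(s+n+1-k')$ and $T_{12}$-power $(n+1)-2k'$ becomes $2^{-k'}\bigl(a_{n-1,k'}+(n-2(k'-1))a_{n-1,k'-1}\bigr)=2^{-k'}a_{n,k'}$ by the defining recursion of the triangle numbers in Proposition~\ref{triangle_numbers}. The extreme indices $k'=0$ (only the first family contributes, matching $a_{n,0}=1$) and $k'=n$ (only the second, matching property~(iii)) close the induction.

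For the mixed derivative I would feed the diagonal formula into itself. Write $\partial_{22}^{(n_2)}(\det(T)^{-s})=(-1)^{n_2}T_{11}^{n_2}\det(T)^{-(s+n_2)}\prod_{l=0}^{n_2-1}(s+l)$ and apply $\partial_{11}^{(n_1)}$ by the Leibniz rule: the factor $T_{11}^{n_2}$ contributes $\partial_{11}^{(j)}(T_{11}^{n_2})=\tfrac{n_2!}{(n_2-j)!}T_{11}^{n_2-j}$ (vanishing once $j>n_2$), and the determinant contributes $\partial_{11}^{(n_1-j)}(\det(T)^{-(s+n_2)})=(-1)^{n_1-j}T_{22}^{n_1-j}\det(T)^{-(s+n_1+n_2-j)}\prod_{l=0}^{n_1-j-1}(s+n_2+l)$ by the diagonal formula just established, with $s$ shifted to $s+n_2$. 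The Pochhammer factors telescope, $\prod_{l=0}^{n_2-1}(s+l)\cdot\prod_{l=0}^{n_1-j-1}(s+n_2+l)=\prod_{l=0}^{n_1+n_2-j-1}(s+l)$; the signs combine to $(-1)^{n_1+n_2+j}$; and the combinatorial coefficient simplifies through $\binom{n_1}{j}\tfrac{n_2!}{(n_2-j)!}=j!\binom{n_1}{j}\binom{n_2}{j}$. Setting $k=j$ gives the asserted expression, the sum truncating automatically at $\min\{n_1,n_2\}$.

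The main obstacle I expect is the bookkeeping in the $\partial_{12}^{(n)}$ step: matching the two families of terms produced by the product rule against the single recursion defining $a_{n,m}$ requires careful re-indexing, and one must check that the formula degrades correctly when $n-2k<0$, where a spurious negative power of $T_{12}$ is killed by the vanishing of $a_{n-1,k}$ — so property~(iii) of Proposition~\ref{triangle_numbers} is genuinely used here, not decorative. The other two parts are routine once the first-order identities above are in hand.
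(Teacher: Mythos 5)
Your proposal is correct and follows essentially the same route as the paper: iterate the first-order identity for the pure diagonal derivatives, feed that result into a Leibniz expansion for the mixed derivative (with the same telescoping of the Pochhammer factors and the simplification $\binom{n_1}{k}\tfrac{n_2!}{(n_2-k)!}=k!\binom{n_1}{k}\binom{n_2}{k}$), and prove the $\partial_{12}^{(n)}$ formula by induction using the product rule and the defining recursion of the $a_{n,k}$, with the boundary term handled via $a_{n,n}=0$. The only cosmetic difference is that the paper also writes out the $n=2$ case of $\partial_{12}^{(n)}$ explicitly before starting the induction.
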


\begin{proof}[Proof of lemma~\ref{det-ableitungen}.]
 Iterating $\partial_{jj}(\det(T)^{-s})=-sT_{ii}\det(T)^{-(s+1)}$ we obtain
 \begin{equation*}
  \partial_{jj}^{(n)}(\det(T)^{-s})\:=\:(-1)^nT_{ii}^n\det(T)^{-(s+n)}\prod_{l=0}^{n-1}(s+l)\:.
  \end{equation*}
  Then for $\partial_{11}^{(n_1)}\partial_{22}^{(n_2)}(\det(T)^{-s})$ we obtain
  \begin{align*}
   &
    \partial_{11}^{(n_1)}\left((-1)^{n_2}T_{11}^{n_2}\det(T)^{-(s+n_2)}\prod_{l=0}^{n_2-1}(s+l)\right)\\
    &=(-1)^{n_2}\prod_{l=0}^{n_2-1}(s+l)\sum_{k=0}^{n_1}\binom{n_1}{k}\partial_{11}^{(k)}(T_{11}^{n_2})\cdot \partial_{11}^{(n_1-k)}(\det(T)^{-(s+n_2)})\\
    &=   
   \sum_{k=0}^{\min\{n_1,n_2\}}\binom{n_1}{k}\frac{n_2!}{(n_2-k)!}(-1)^{n_1+n_2+k}T_{11}^{n_2-k}T_{22}^{n_1-k}\times\\
   &\hspace*{40mm}\times
  \det(T)^{-(s+n_1+n_2-k)}\prod_{l=0}^{n_1+n_2-k-1}(s+l)\:.
  \end{align*}
 Further, $\partial_{12}(\det(T)^{-s})=sT_{12}\det(T)^{-(s+1)}$
 as well as
 \begin{align*}
  \partial_{12}^{(2)}(\det(T)^{-s})&
  \:=\:s(s+1)T_{12}^2\det(T)^{-(s+2)}+\frac{1}{2}s\det(T)^{-(s+1)}\\
   \end{align*}
   satisfy the claimed formula. Then  $\partial_{12}^{(n+1)}$ is given by induction
\begin{align*}
& \partial_{12}\left( \sum_{k=0}^{n-1}2^{-k}a_{n-1,k}\det(T)^{-(s+n-k)}T_{12}^{n-2k}\prod_{l=0}^{n-k-1}(s+l)\right)\\
=& \sum_{k=0}^{n-1}2^{-k}a_{n-1,k}\cdot(s+n-k)\cdot\det(T)^{-(s+n+1-k)}T_{12}^{n+1-2k}\prod_{l=0}^{n-k-1}(s+l)\\
&+\sum_{k=0}^{n-1}2^{-k}a_{n-1,k}\cdot\frac{1}{2}(n-2k)\cdot\det(T)^{-(s+n-k)}T_{12}^{n+1-2(k+1)}\prod_{l=0}^{n-k-1}(s+l)\\
=&\sum_{k=0}^{n}2^{-k}a_{n,k}\det(T)^{-(s+n+1-k)}T_{12}^{n+1-2k}\prod_{l=0}^{n+1-k-1}(s+l)\:,
\end{align*}  
where we have used the product rule and the recursion formula defining the numbers~$a_{n,k}$ (see proposition~\ref{triangle_numbers}) as well as the fact  $a_{n,n}=0$ for $n\geq 2$.
\end{proof}
\begin{lem}\label{polynoimal-integrals}
 Let $n_1,n_2,n_3\geq 0$ be integers. The integral
 \begin{equation*}
  \int_{Y>0}Y_{11}^{n_1}Y_{22}^{n_2}Y_{12}^{n_3}\det(Y)^{s}e^{-\trace(Y)}dY_{inv}
 \end{equation*}
is a holomorphic function on $\re s>\frac{1}{2}$. For odd $n_3$ it is zero, while for even $n_3$ it is given by
\begin{equation*}
 \Gamma_2(s)\cdot 2^{-\frac{n_3}{2}}a_{n_3-1,\frac{n_3}{2}}\cdot\sum_{k=0}^{\min\{n_1,n_2\}}(-1)^{k}\binom{n_1}{k}\binom{n_2}{k}k!\cdot\hspace*{-5mm}
 \prod_{l=0}^{n_1+n_2+\frac{n_3}{2}-k-1}\hspace*{-5mm}(s+l)\:.
\end{equation*}
Here we put $a_{-1,0}=1$.  In particular, the integral has meromorphic continuation 
to the complex plane, the poles being at most simple and included in those of $\Gamma_2(s)$.
\end{lem}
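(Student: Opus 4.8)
The plan is to reduce everything to the master identity (\ref{gamma_grundformel}) by differentiating in the entries of $T$ and then specializing to $T=\mathbf 1_2$. With the normalized partials $\partial_{ij}=\tfrac{1+\delta_{ij}}{2}\partial_{T_{ij}}$ and $m=2$ one has $\trace(TY)=T_{11}Y_{11}+2T_{12}Y_{12}+T_{22}Y_{22}$, so that $\partial_{11}$, $\partial_{22}$, $\partial_{12}$ applied to $e^{-\trace(TY)}$ bring down the factors $-Y_{11}$, $-Y_{22}$, $-Y_{12}$ respectively. Since the integral in (\ref{gamma_grundformel}) converges locally uniformly in $T$ on the cone of positive definite matrices for $\re s>\tfrac12$, I may differentiate under the integral sign to get, for all $T>0$,
\begin{equation*}
 \int_{Y>0}Y_{11}^{n_1}Y_{22}^{n_2}Y_{12}^{n_3}\det(Y)^{s}e^{-\trace(TY)}dY_{inv}\:=\:(-1)^{n_1+n_2+n_3}\,\Gamma_2(s)\,\partial_{11}^{(n_1)}\partial_{22}^{(n_2)}\partial_{12}^{(n_3)}\bigl(\det(T)^{-s}\bigr)\:,
\end{equation*}
and the claim will follow by setting $T=\mathbf 1_2$ and inserting the formulas of Lemma~\ref{det-ableitungen}. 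Holomorphy of the left-hand side on $\re s>\tfrac12$ is a byproduct of the same estimate: since $|Y_{12}|\le\tfrac12\trace(Y)$ and $Y_{ii}\le\trace(Y)$ for $Y>0$, the integrand is dominated in absolute value by $\trace(Y)^{n_1+n_2+n_3}\det(Y)^{\re s}e^{-\trace(Y)}\le C_\eps\det(Y)^{\re s}e^{-(1-\eps)\trace(Y)}$, whose integral is $C_\eps(1-\eps)^{-2s}\Gamma_2(\re s)$, finite for $\re s>\tfrac12$.

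For the evaluation I would apply $\partial_{12}^{(n_3)}$ first. By Lemma~\ref{det-ableitungen} this produces $\sum_{j=0}^{n_3-1}2^{-j}a_{n_3-1,j}\det(T)^{-(s+n_3-j)}T_{12}^{n_3-2j}\prod_{l=0}^{n_3-j-1}(s+l)$, a sum whose terms are powers of $\det(T)$ times powers of $T_{12}$ not involving $T_{11},T_{22}$. Applying $\partial_{11}^{(n_1)}\partial_{22}^{(n_2)}$ termwise via the mixed-derivative formula of Lemma~\ref{det-ableitungen} (with exponent $s+n_3-j$) and then putting $T=\mathbf 1_2$, the factor $T_{12}^{n_3-2j}$ turns into $0^{n_3-2j}$, which vanishes unless $n_3=2j$. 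Thus for odd $n_3$ all summands die and the integral is zero, while for even $n_3$ only $j=n_3/2$ survives and contributes $2^{-n_3/2}a_{n_3-1,n_3/2}\prod_{l=0}^{n_3/2-1}(s+l)$ times $\sum_{k}k!\binom{n_1}{k}\binom{n_2}{k}(-1)^{n_1+n_2+k}\prod_{l=0}^{n_1+n_2-k-1}\bigl(s+\tfrac{n_3}{2}+l\bigr)$. Concatenating the two ascending products gives $\prod_{l=0}^{n_1+n_2+n_3/2-k-1}(s+l)$, and the overall sign $(-1)^{n_1+n_2+n_3}(-1)^{n_1+n_2+k}=(-1)^{n_3+k}$ collapses to $(-1)^{k}$ because $n_3$ is even; together with the $\Gamma_2(s)$ out front this is exactly the asserted expression. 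The degenerate case $n_3=0$ is handled by the mixed-derivative formula alone and matches the statement once one reads $a_{-1,0}=1$ and $2^{0}=1$.

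Finally, the meromorphic continuation and the pole statement are immediate from the closed form: the right-hand side is $\Gamma_2(s)=\pi^{1/2}\Gamma(s)\Gamma(s-\tfrac12)$ times a polynomial in $s$, so it extends meromorphically to all of $\CC$ with poles contained in the pole set of $\Gamma_2(s)$, and those poles are simple because the progressions $0,-1,-2,\dots$ and $\tfrac12,-\tfrac12,-\tfrac32,\dots$ are disjoint; the polynomial factor can only cancel poles, never create new ones. I expect the only real care to be the index bookkeeping — tracking the two nested ascending products and the accumulated signs through the double application of Lemma~\ref{det-ableitungen} — with no conceptual obstacle.
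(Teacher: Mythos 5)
Your proposal is correct and follows the paper's own argument exactly: differentiate the identity (\ref{gamma_grundformel}) by $\partial_{11}^{(n_1)}\partial_{22}^{(n_2)}\partial_{12}^{(n_3)}$, insert the formulas of Lemma~\ref{det-ableitungen}, and evaluate at $T=\mathbf 1_2$, where only the $T_{12}$-exponent-zero term survives. The added justifications (domination for holomorphy, differentiation under the integral, the sign and index bookkeeping) are sound and merely make explicit what the paper leaves implicit.
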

\begin{proof}[Proof of lemma~\ref{polynoimal-integrals}]
Starting with the identity
 \begin{equation*}
  \int_{Y>0} \det(Y)^se^{-\trace(TY)}dY_{inv}\:=\:\det(T)^{-s}\Gamma_2(s) 
 \end{equation*}
 for $\re s>\frac{1}{2}$, which holds for all positive definite $T$, we differentiate both sides by $\partial_{11}^{(n_1)}\partial_{22}^{(n_2)}\partial_{12}^{(n_3)}$ to determine
 $\int_{Y>0}Y_{11}^{n_1}Y_{22}^{n_2}Y_{12}^{n_3}\det(Y)^{s}e^{-\trace(TY)}dY_{inv}$ by
 \begin{equation*}
\Gamma_2(s)\cdot (-1)^{n_1+n_2+n_3}\partial_{11}^{(n_1)}\partial_{22}^{(n_2)}\partial_{12}^{(n_3)}(\det(T)^{-s})\:.
 \end{equation*}
 Evaluating at $T=\mathbf 1_2$, we obtain a formula for the integral in question
by
\begin{equation*}
\Gamma_2(s)\cdot (-1)^{n_1+n_2+n_3}\partial_{11}^{(n_1)}\partial_{22}^{(n_2)}\partial_{12}^{(n_3)}(\det(T)^{-s})\mid_{T=\mathbf 1_2}\:.
 \end{equation*}
 Lemma~\ref{det-ableitungen} determines the derivative 
 \begin{align*}
  \partial_{11}^{(n_1)}\partial_{22}^{(n_2)}\partial_{12}^{(n_3)}(\det(T)^{-s})=&
  \sum_{k=0}^{\min\{n_1,n_2\}}\sum_{k_3=0}^{n_3-1}(-1)^{n_1+n_2-k}2^{-k_3}\binom{n_1}{k}\binom{n_2}{k}k!\times\\
  &\times T_{11}^{n_1-k}T_{22}^{n_2-k}T_{12}^{n_3-2k_3}
  \cdot \det(T)^{-(s+n_1+n_2+n_3-k-k_3)}\times\\
  &\times\prod_{l=0}^{n_1+n_2+n_3-k-k_3-1}(s+l)\:.
 \end{align*}
Evaluating  at $T=\mathbf 1_2$, the factor $T_{12}^{n_3-2k_3}$ is zero apart from the case $n_3=2k_3$.
In this case the formula reduces to the  claimed one, whereas it is zero for odd~$n_3$.
\end{proof}
Consider the explicit realization of the representation $\rho=\rho_r$ of $\GL_2(\CC)$ of highest weight $(r,0)$  on the space $\mathcal P_r$ of homogeneous polynomials of degree $r$ 
in the variable $z=(z_1,z_2)$, 
\begin{equation*}
 \rho_{r}(g)\bigl( P(z)\bigr)\:=\:P(z\cdot g)
\end{equation*} 
for $P\in \mathcal P_r$. 
We  determine  $\Gamma_2(\rho_r\otimes\det^s)$ by its action on the $K=\SO(2)$-weight spaces.
 For $k=0,1,\dots,r$ the polynomial
\begin{equation*}
 V_{k}(z)\:=\: (z_1-iz_2)^{r-k}(z_1+iz_2)^k
\end{equation*}
is a $K$-eigenfunction of weight $-r+2k$.
We find
\begin{equation*}
 V_k(z)\:=\:\sum_{\nu=0}^r z_1^{r-\nu}z_2^\nu i^\nu\sum_{j=0}^{\min\{r-k,\nu\}}(-1)^j\binom{r-k}{j}\binom{k}{\nu-j}\:,
\end{equation*}
whereas
\begin{align*}
 \rho_r(Y)V_k(z)&=\bigl[(Y_{11}-iY_{12})z_1+(Y_{22}+iY_{12})(-iz_2)\bigr]^{r-k}\bigl[(Y_{11}+iY_{12})z_1+(Y_{22}-iY_{12})iz_2\bigr]^{k}\\
 &=\sum_{\nu=0}^rz_1^{r-\nu}z_2^\nu i^\nu\cdot P_k(\nu,Y)\:,
\end{align*}
with 
\begin{align*}
 P_k(\nu,Y)=&\sum_{j=0}^{\min\{r-k,\nu\}}(-1)^j\binom{r-k}{j}\binom{k}{\nu-j}(Y_{11}-iY_{12})^{r-k-j}\\
 &\hspace*{2.5cm}\times(Y_{11}+iY_{12})^{k+j-\nu}(Y_{22}+iY_{12})^{j}(Y_{22}-iY_{12})^{\nu-j}\:.
\end{align*}
By lemma~\ref{orthogonale_invarianz}, $\Gamma_2(\rho_r\otimes\det^s)$ commutes with $K$, so 
acts by scalars on the $1$-dimensional $K$-eigenspaces.
Defining 
\begin{equation*}
 c_k(\nu)\:=\:\sum_{j=0}^{\min\{r-k,\nu\}}\binom{r-k}{j}\binom{k}{\nu-j}(-1)^j\:
\end{equation*}
the integral
\begin{equation}\label{gamma-r-k-expression}
 \Gamma(r,k,s)\:=\:\frac{1}{c_k(\nu)}\int_{\mathcal Y}P_k(\nu,Y)\det(Y)^se^{-\trace(Y)}~dY_{inv}
\end{equation}
is the $\Gamma(\rho_r\otimes\det^s)$-eigenvalue of $V_k(z)$, which in particular  is independent of $\nu$.
\begin{prop}\label{SO(2)-Gamma-eigenvalues}
For $k=0,1,\dots,r$ we have the functional equation
\begin{equation*}
 \Gamma(r,k,s)\:=\:\Gamma(r,r-k,s)\:.
\end{equation*}
 For $k=0,1,\dots,\lfloor \frac{r}{2}\rfloor$ the function $\Gamma(r,k,s)$ is explicitly given by
 \begin{equation*}
  \Gamma(r,k,s)\:=\:\Gamma_2(s)\sum_{\mu=0}^{\lfloor\frac{r}{2}\rfloor}   \frac{a_{2\mu-1,\mu}}{2^\mu}\sum_{j=0}^{k}\binom{k}{j}\binom{r-2k}{2(\mu-j)}(-1)^{\mu-j}
  \prod_{l=0}^{r-\mu-1}(s+l)\:.
 \end{equation*}
 With respect to the $\SO(2)$-weight decomposition, the operator $\Gamma(\rho_r\otimes\det^s)$ is given by the diagonal matrix
 \begin{equation*}
  \Gamma(\rho_r\otimes\det\nolimits^s)\:=\:\mathop{diag}\bigl(\Gamma(r,0,s),\Gamma(r,1,s),\dots,\Gamma(r,\lfloor\frac{r}{2}\rfloor,s),\dots,\Gamma(r,1,s),\Gamma(r,0,s)\bigr)\:.
 \end{equation*}
In particular, $\Gamma(\rho_r\otimes\det^s)$ is divisible by $\Gamma_2(s)\prod_{l=0}^{\lfloor\frac{r}{2}\rfloor-1}(s+l)$. Apart from its finite set of zeros and its set of poles 
which is contained in that of $\Gamma_2(s)$, the operator  $\Gamma(\rho_r\otimes\det^s)$ is invertible for $\re s>\frac{1}{2}$.
\end{prop}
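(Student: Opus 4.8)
The plan is to diagonalize $\Gamma(\rho_r\otimes\det^s)$ in the $\SO(2)$-weight basis $V_0,\dots,V_r$, to compute the resulting scalar eigenvalues $\Gamma(r,k,s)$ of~(\ref{gamma-r-k-expression}) by specializing the (already established) $\nu$-independent expression to $\nu=0$ and then invoking Lemma~\ref{polynoimal-integrals}, and finally to read the diagonal shape, the divisibility, and the invertibility off the explicit formula so obtained.

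First I would settle the functional equation. Setting $\nu=0$ in~(\ref{gamma-r-k-expression}), only the $j=0$ term of $P_k(\nu,Y)$ and of $c_k(\nu)$ survives, so $c_k(0)=1$ and
\[
 \Gamma(r,k,s)\:=\:\int_{Y>0}(Y_{11}-iY_{12})^{r-k}(Y_{11}+iY_{12})^{k}\det(Y)^{s}e^{-\trace(Y)}\,dY_{inv}\:.
\]
The substitution $Y\mapsto\sigma Y\sigma$ with $\sigma=\mathop{diag}(1,-1)$ --- the reflection in $O(2)\setminus\SO(2)$, which preserves $\{Y>0\}$, $\det(Y)$, $\trace(Y)$ and $dY_{inv}$ and sends $Y_{12}\mapsto-Y_{12}$ --- interchanges the two factors, whence $\Gamma(r,k,s)=\Gamma(r,r-k,s)$. (Equivalently, this is the $O(2)$-refinement of Lemma~\ref{orthogonale_invarianz} applied to $\rho_r(\sigma)$, which carries $V_k$ to $V_{r-k}$.)

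Next, still with $\nu=0$, I would expand $(Y_{11}-iY_{12})^{r-k}(Y_{11}+iY_{12})^{k}=\sum_{t=0}^{r}i^{t}c_{k}(t)\,Y_{11}^{r-t}Y_{12}^{t}$, where $c_k(t)$ is the coefficient of $x^t$ in $(1-x)^{r-k}(1+x)^{k}$, and apply Lemma~\ref{polynoimal-integrals} term by term with $(n_1,n_2,n_3)=(r-t,0,t)$. Odd $t$ contribute nothing, and since $n_2=0$ the inner summation of that lemma collapses to a single term, so the $t=2\mu$ contribution is $\Gamma_2(s)\,2^{-\mu}a_{2\mu-1,\mu}\prod_{l=0}^{r-\mu-1}(s+l)$. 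Collecting terms and using $i^{2\mu}=(-1)^{\mu}$ gives
\[
 \Gamma(r,k,s)\:=\:\Gamma_2(s)\sum_{\mu=0}^{\lfloor r/2\rfloor}(-1)^{\mu}c_k(2\mu)\,\frac{a_{2\mu-1,\mu}}{2^{\mu}}\prod_{l=0}^{r-\mu-1}(s+l)\:.
\]
The remaining point --- and the only genuinely non-routine step, in my view --- is to rewrite $c_k(2\mu)$ in the claimed shape. For $k\leq\lfloor r/2\rfloor$ one has the factorization $(1-x)^{r-k}(1+x)^{k}=(1-x)^{r-2k}(1-x^2)^{k}$; extracting the coefficient of $x^{2\mu}$ on the right yields $c_k(2\mu)=\sum_{j=0}^{k}(-1)^{j}\binom{k}{j}\binom{r-2k}{2(\mu-j)}$, and substituting this produces exactly the asserted expression. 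The heart of the argument is thus this one-line generating-function identity; everything around it is bookkeeping with the triangle numbers $a_{n,m}$ and with $\Gamma_2$.

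Finally, the structural consequences. The $\SO(2)$-weights $-r+2k$ ($k=0,\dots,r$) are pairwise distinct, so $V_0,\dots,V_r$ is an eigenbasis for $\SO(2)$ and hence, by Lemma~\ref{orthogonale_invarianz}, an eigenbasis for $\Gamma(\rho_r\otimes\det^s)$ with eigenvalue $\Gamma(r,k,s)$ on $V_k$; the functional equation then gives the palindromic diagonal matrix in the statement. Each summand of the explicit formula carries the factor $\Gamma_2(s)\prod_{l=0}^{r-\mu-1}(s+l)$ with $\mu\leq\lfloor r/2\rfloor$, hence is divisible by $\Gamma_2(s)\prod_{l=0}^{\lfloor r/2\rfloor-1}(s+l)$, which is the divisibility claim. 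Writing $\Gamma(r,k,s)=\Gamma_2(s)\,Q_k(s)$, the polynomial $Q_k$ is monic of degree $r$ --- its top term $\prod_{l=0}^{r-1}(s+l)$ comes from $\mu=0$, using $a_{-1,0}=1$, while all higher $\mu$ contribute degree $<r$ --- so $Q_k$ has only finitely many zeros; since $\Gamma_2(s)=\pi^{1/2}\Gamma(s)\Gamma(s-\tfrac12)$ is holomorphic and nowhere zero for $\re s>\tfrac12$, the operator $\Gamma(\rho_r\otimes\det^s)$ is invertible there outside the finite union of the zero sets of the $Q_k$, and its poles lie among those of $\Gamma_2(s)$.
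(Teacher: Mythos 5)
Your proposal is correct and follows essentially the same route as the paper: specialize (\ref{gamma-r-k-expression}) to $\nu=0$, expand $(Y_{11}-iY_{12})^{r-k}(Y_{11}+iY_{12})^{k}$, and integrate monomial by monomial via Lemma~\ref{polynoimal-integrals}; your factorization $(1-x)^{r-k}(1+x)^k=(1-x)^{r-2k}(1-x^2)^k$ is just the generating-function form of the paper's grouping $(Y_{11}^2+Y_{12}^2)^{k}(Y_{11}-iY_{12})^{r-2k}$, and your $O(2)$-substitution $Y\mapsto\sigma Y\sigma$ is the same symmetry the paper invokes as sign-independence of the integral. The only addition is that you spell out the eigenbasis, divisibility and invertibility arguments that the paper leaves implicit, and these are all fine.
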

\begin{proof}[Proof of proposition~\ref{SO(2)-Gamma-eigenvalues}]
We determine $\Gamma(r,k,s)$ by choosing $\nu=0$ in (\ref{gamma-r-k-expression}). 
For integers $a,b\geq 0$ 
 \begin{equation*}
  (Y_{11}^2+Y_{12}^2)^{a}(Y_{11}\pm iY_{12})^{b}\:=\:\sum_{j=0}^a\sum_{l=0}^b\binom{a}{j}\binom{b}{l}(\pm i)^lY_{11}^{2(a-j)+b-l}Y_{12}^{2j+l}\:,
 \end{equation*}
 so by lemma~\ref{integral_mit_Y_eintraegen} only the summands with even $Y_{12}$-exponents contribute to the integral
  \begin{align*}
 &\int_{\mathcal Y}(Y_{11}^2+Y_{12}^2)^{a}(Y_{11}\pm iY_{12})^{b}\det(Y)^se^{-\trace(Y)}~dY_{inv}\\
 &=\Gamma_2(s)\sum_{j=0}^a\sum_{l=0}^{\lfloor\frac{b}{2}\rfloor}\binom{a}{j}\binom{b}{2l}(-1)^l\frac{a_{2(j+l)-1,j+l}}{2^{j+l}}\prod_{\mu=0}^{2a+b-(j+l)-1}(s+\mu)\:.
\end{align*}
Notice that the integral is independent of the sign in $(Y_{11}\pm iY_{12})^{b}$.
Accordingly $\Gamma(r,k,s)=\Gamma(r,r-k,s)$, and we may restrict  to the case $k\leq r-k$,
and apply the above formula with $a=k$ and $b=r-2k$.
\end{proof}
In particular, in case $k=0$
\begin{equation}\label{gamma-r-0-eq-1}
 \Gamma(r,0,s)\:=\:\Gamma_2(s)\sum_{\mu=0}^{\lfloor\frac{r}{2}\rfloor}\binom{r}{2\mu}(-1)^\mu\frac{a_{2\mu-1,\mu}}{2^{\mu}}\prod_{l=0}^{r-\mu-1}(s+l)\:.
\end{equation}
On the other hand,  we recall the formula valid for all $\nu$
\begin{equation*}
 \Gamma(r,0,s)\:=\:\int_{\mathcal Y}(Y_{11}-iY_{12})^{r-\nu}(Y_{11}+iY_{12})^\nu\det(Y)^se^{-\trace(Y)}~dY_{inv}\:.
\end{equation*}
Because
\begin{equation*}
 (Y_{11}+Y_{22})^r\:=\:\sum_{\nu=0}^r\binom{r}{j}(Y_{11}-iY_{12})^{r-\nu}(Y_{22}+iY_{12})^\nu\:,
\end{equation*}
we obtain
\begin{equation*}
 \int_{\mathcal Y}(Y_{11}+Y_{22})^r\det(Y)^se^{-\trace(Y)}~dY_{inv}\:=\: 2^r\cdot \Gamma(r,0,s)\:,
\end{equation*}
which implies
\begin{equation*}
 \Gamma(r,0,s)\:=\: \frac{\Gamma_2(s)}{2^r}\sum_{j=0}^r\binom{r}{j}\sum_{\mu=0}^{\min\{j,r-j\}}\binom{r-j}{\mu}\binom{j}{\mu}(-1)^\mu\mu!\prod_{l=0}^{r-\mu-1}(s+l)\:,
\end{equation*}
or equivalently
\begin{equation}\label{gamma-r-0-eq-2}
 \Gamma(r,0,s)\:=\: \Gamma_2(s)\sum_{\mu=0}^{\lfloor\frac{r}{2}\rfloor}
 (-1)^\mu\frac{\mu!}{2^r}\sum_{j=\mu}^{r-\mu}
 \binom{r}{j}\binom{r-j}{\mu}\binom{j}{\mu}\prod_{l=0}^{r-\mu-1}(s+l)\:.
\end{equation}
Noticing that the polynomials $\prod_{l=0}^{r-\mu-1}(s+l)$ for $\mu=0,\dots,\lfloor\frac{r}{2}\rfloor$ are linearly independent, we obtain 
by comparing the coefficients of (\ref{gamma-r-0-eq-1}) 
and (\ref{gamma-r-0-eq-2})
\begin{equation*}
 \frac{\mu!}{2^r}\sum_{j=\mu}^{r-\mu}
 \binom{r}{j}\binom{r-j}{\mu}\binom{j}{\mu}\:=\:\binom{r}{2\mu}\frac{a_{2\mu-1,\mu}}{2^\mu}\:,
\end{equation*}
which is easily simplified to the identity of proposition~\ref{a_2n-1,n-coefficient} (a) below.

\begin{prop}\label{a_2n-1,n-coefficient}
The triangle numbers defined in Proposition~\ref{triangle_numbers} take the following special values.
\begin{itemize}
 \item [(a)]
 For all $\mu=0,1,2,\dots$,
 \begin{equation*}
 a_{2\mu-1,\mu}\:=\:\frac{(2\mu)!}{2^\mu \mu!}\:=\:(2\mu-1)!!\:.
\end{equation*}
\item [(b)]
For all $\mu=1,2,3,\dots$,
\begin{equation*}
 a_{2\mu-1,\mu-1}\:=\:\mu\cdot (2\mu-1)!!\:.
\end{equation*}
\item [(c)]
For all $\mu=1,2,3,\dots$,
\begin{equation*}
 a_{2\mu,\mu-1}\:=\:\frac{\mu}{3} (2\mu+1)!!\:.
\end{equation*}
\end{itemize}
\end{prop}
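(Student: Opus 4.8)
The plan is to avoid an induction carried out \emph{within} the three stated families and instead first establish one uniform closed form for \emph{all} triangle numbers, from which (a), (b), (c) each drop out by a one-line substitution.

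\textbf{Step 1 (closed form).} I would first prove that for all $n,m\in\NN_0$
\begin{equation*}
 a_{n,m}\:=\:(2m-1)!!\cdot\binom{n+1}{2m}\:,
\end{equation*}
with the conventions $(-1)!!=1$ and $\binom{n+1}{2m}=0$ when $2m>n+1$ (so this single formula also contains property~(iii) of Proposition~\ref{triangle_numbers}, and is consistent with the value $a_{-1,0}=1$ used in Lemma~\ref{polynoimal-integrals}). This is an induction on $n$. For $n=0$ the right-hand side equals $1$ for $m=0$ and $0$ for $m\geq 1$, matching the initial values $a_{0,0}=1$, $a_{0,m}=0$. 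For the inductive step with $m\geq 1$ one inserts the formula into the defining recursion $a_{n,m}=(n-2m+2)a_{n-1,m-1}+a_{n-1,m}$; after pulling out the common factor $(2m-3)!!$ the claim becomes the elementary binomial identity
\begin{equation*}
 (n-2m+2)\binom{n}{2m-2}+(2m-1)\binom{n}{2m}\:=\:(2m-1)\binom{n+1}{2m}\:,
\end{equation*}
which follows from Pascal's rule $\binom{n+1}{2m}=\binom{n}{2m}+\binom{n}{2m-1}$ together with $(n-2m+2)\binom{n}{2m-2}=(2m-1)\binom{n}{2m-1}$. (The case $m=0$ of the inductive step is trivial, since $a_{n,0}=1$.)

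\textbf{Step 2 (specialization).} Setting $(n,m)$ equal to $(2\mu-1,\mu)$, $(2\mu-1,\mu-1)$, and $(2\mu,\mu-1)$ in Step~1 gives, respectively,
\begin{equation*}
 a_{2\mu-1,\mu}=(2\mu-1)!!\binom{2\mu}{2\mu}\:,\qquad a_{2\mu-1,\mu-1}=(2\mu-3)!!\binom{2\mu}{2}\:,\qquad a_{2\mu,\mu-1}=(2\mu-3)!!\binom{2\mu+1}{3}\:.
\end{equation*}
Then (a) follows from $\binom{2\mu}{2\mu}=1$ and the standard identity $(2\mu-1)!!=\frac{(2\mu)!}{2^\mu\mu!}$; (b) follows from $\binom{2\mu}{2}=\mu(2\mu-1)$ and $(2\mu-1)!!=(2\mu-1)(2\mu-3)!!$; and (c) follows from $\binom{2\mu+1}{3}=\frac{1}{6}(2\mu+1)(2\mu)(2\mu-1)$ together with $(2\mu+1)!!=(2\mu+1)(2\mu-1)(2\mu-3)!!$. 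The degenerate cases ($\mu=0$ in (a); $\mu=1$ in (b),(c)) hold by the conventions fixed above, and as a sanity check the closed form also recovers parts (i) and (ii) of Proposition~\ref{triangle_numbers}.

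\textbf{Expected obstacle.} The only place with genuine content is guessing the closed form of Step~1; once it is written down, the verification is a routine single-variable induction and Step~2 is pure bookkeeping. I note in passing that (a) can alternatively be read off directly from the combinatorial identity displayed just before the proposition: the sum there collapses, via the subset-of-a-subset identity $\binom{r}{j}\binom{r-j}{\mu}\binom{j}{\mu}=\binom{r}{\mu}\binom{r-\mu}{\mu}\binom{r-2\mu}{j-\mu}$ and the binomial theorem, to $\frac{\mu!}{4^\mu}\binom{r}{2\mu}\binom{2\mu}{\mu}$, yielding $a_{2\mu-1,\mu}=\frac{(2\mu)!}{2^\mu\mu!}$. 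But (b) and (c) are not visible from that identity, and a direct induction on $\mu$ staying within the three families $a_{2\mu-1,\mu}$, $a_{2\mu-1,\mu-1}$, $a_{2\mu,\mu-1}$ does not close up — applying the recursion to $a_{2\mu,\mu-1}$ produces the term $a_{2\mu-1,\mu-2}$, which lies outside these families — so routing everything through Step~1 is the clean path.
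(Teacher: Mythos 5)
Your proof is correct, and it takes a genuinely different route from the paper's. The paper does not prove part (a) inside the proof of the proposition at all: it obtains the identity $a_{2\mu-1,\mu}=(2\mu-1)!!$ analytically, by computing $\Gamma(r,0,s)$ in two ways (once via the $K$-eigenvector $V_0$ and Lemma~\ref{polynoimal-integrals}, once via the binomial expansion of $\trace(Y)^r$) and comparing coefficients of the linearly independent polynomials $\prod_{l=0}^{r-\mu-1}(s+l)$. Parts (b) and (c) are then extracted by reading the recursion \emph{backwards}: from $a_{2\mu,\mu}=2a_{2\mu-1,\mu-1}+a_{2\mu-1,\mu}$ together with $a_{2\mu,\mu}=a_{2\mu+1,\mu+1}$ (Proposition~\ref{triangle_numbers}(iv)) one solves for $a_{2\mu-1,\mu-1}$, and from $a_{2\mu+1,\mu}=3a_{2\mu,\mu-1}+a_{2\mu,\mu}$ one solves for $a_{2\mu,\mu-1}$ — so, contrary to your closing remark, a recursion-based argument within these families does close up once the recursion is inverted in this way. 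Your approach instead proves the uniform closed form $a_{n,m}=(2m-1)!!\binom{n+1}{2m}$ by a direct induction on $n$ (I checked the binomial identity $(n-2m+2)\binom{n}{2m-2}=(2m-1)\binom{n}{2m-1}$ and the Pascal step; they are correct), from which all three parts, as well as properties (i)--(iv) of Proposition~\ref{triangle_numbers} and the convention $a_{-1,0}=1$, follow by substitution. What your route buys is a self-contained, purely combinatorial proof that is strictly more general (it evaluates every $a_{n,m}$, not just three diagonals) and independent of the Gamma-integral machinery; what the paper's route buys is that part (a) comes for free as a byproduct of computations it needs anyway, and the derivation of (b), (c) requires no guessed closed form. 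Your parenthetical rederivation of (a) from the displayed identity via $\binom{r}{j}\binom{r-j}{\mu}\binom{j}{\mu}=\binom{r}{\mu}\binom{r-\mu}{\mu}\binom{r-2\mu}{j-\mu}$ is also correct.
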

\begin{proof}[Proof of Proposition~\ref{a_2n-1,n-coefficient}]
By the defining recursion formula we obtain
 \begin{equation*}
  a_{2\mu,\mu}\:=\:2a_{2\mu-1,\mu-1}+a_{2\mu-1,\mu}\:.
 \end{equation*}
Because part~(a) has already been verified for all $\nu$, we obtain part~(b) by using proposition~\ref{triangle_numbers}~(iv)
\begin{equation*}
 a_{2\mu-1,\mu-1}\:=\:\frac{1}{2}\bigr((2\mu+1)!!-(2\mu-1)!!\bigr)\:=\: \mu\cdot (2\mu-1)!!\:.
\end{equation*}
By recursion $a_{2\mu+1,\mu}=3a_{2\mu,\mu-1}+a_{2\mu,\mu}$,
 and applying (a) and (b), we obtain part~(c)
\begin{equation*}
 a_{2\mu,\mu-1}\:=\:\frac{1}{3}\bigl((\mu+1)(2\mu+1)!!-(2\mu+1)!!\bigr)\:=\:\frac{\mu}{3}(2\mu+1)!!\:.\qedhere
\end{equation*}
\end{proof}
\begin{exmp}[Symmetric representation]
For $r=2$ the representation $\rho_2$ is isomorphic to the symmetric representation. 
%
%
In terms of the basis of eigenvectors $V_k(z)$, $k=-2,0,2$, for  $\SO(2)$,
the $\Gamma$-integral is given by the matrix
\begin{equation*}
 \Gamma(\rho_2\otimes\det\nolimits^s)\:=\:s\Gamma_2(s)\begin{pmatrix}s+\frac{1}{2}&&\\&s+\frac{3}{2}&\\&&s+\frac{1}{2}                             
                                          \end{pmatrix}\:.
\end{equation*}
Equivalently, on the space of symmetric matrices $X=\left(\begin{smallmatrix}X_1&X_{12}\\X_{12}&X_2\end{smallmatrix}\right)$,
\begin{align*}
  \Gamma((\mathop{Sym}\otimes\det\nolimits^s)(X))&=\int_{Y>0}YXY\det(Y)^se^{-\trace(Y)}dY_{inv}\\
  &=
  s(s+1)\Gamma_2(s)\cdot X+\frac{s}{2}\Gamma_2(s)\cdot\tilde X\:,
 \end{align*}
 where $\tilde X=\left(\begin{smallmatrix}X_2&-X_{12}\\-X_{12}&X_1\end{smallmatrix}\right)$ is the adjunct matrix for $X$.
  In particular, this example shows that $\Gamma(\rho)$ is not a scalar operator in general.
\end{exmp}


\subsection{Weyl's character formula}\label{sec-weyl}

Lemma~\ref{orthogonale_invarianz} suggests the following integral transformation.
For the   diagonal torus $T$ of $\GL(m,\RR)$ let 
\begin{equation*}
 T_{>0}\:=\: \{t=\mathop{diag}(t_1,\dots, t_m)\in T \mid t_1>t_2>\dots>t_m\}\:.
\end{equation*}
Denote by $\P_m$ the set of positive definite $(m,m)$-matrices, $\P_m\subset \mathop{Sym}\nolimits^2(\RR^m)$.
Let $K=\SO(m)$ with unit element $E$. There is an injective map 
\begin{align*}
& T_{>0}\times K\:\to\:\P_m\:,\quad   (t,k)\:\mapsto\: ktk'\:=\:kt k^{-1}\:=\:Y\:,
\end{align*}
which has open and dense image.
For the pullback $\phi^\ast$ we find
\begin{equation*}
 \phi^\ast(dY)\:(t,E)\:=\:(dX'\cdot t+t\cdot dX)+dt\:,
\end{equation*}
where
\begin{equation*}
 dX\:=\:\begin{pmatrix}0&dx_{12}&\dots&dx_{1m}\\
         -dx_{12}&\ddots&&\vdots\\
         \vdots&\dots&   \dots&0     \end{pmatrix}\:.
\end{equation*}
So $-dX\cdot t +t\cdot dX$ equals
\begin{equation*}
 \begin{pmatrix}
                             0&(t_1-t_2)dx_{12}&\dots&(t_1-t_m)dx_{1m}\\
                             (t_1-t_2)dx_{12}&\ddots&\ddots&\vdots \\
                             \vdots&\dots&(t_{m-1}-t_m)dx_{m-1,m}&0
                            \end{pmatrix}\:.
\end{equation*}
Accordingly, the pullback $\phi^\ast(\det(Y)^{-\frac{m+1}{2}}\prod_{i\leq j}dY_{ij})$ at $(t,E)$ of the invariant measure $dY_{inv}$
on $\P_m$ is given by
\begin{equation*}
 \pm\det(t)^{-\frac{m+1}{2}}\cdot \prod_{i<j}(t_i-t_j)\bigwedge\nolimits_{i<j}dx_{ij}\wedge (dt_1\wedge\dots\wedge dt_m)\:.
\end{equation*}
Since $\phi^\ast(dY_{inv})$ is $K$-invariant, we obtain
\begin{equation}\label{weylintegral}
\Gamma(\rho)\:=\: \pm\int_K\int_{T_{>0}}\det(t)^{-\frac{m+1}{2}}\prod_{i<j}(t_i-t_j)\rho(ktk')e^{-\trace(t)}~dt~dk\:.
\end{equation}
We double check this formula by testing it for $\rho=\det\nolimits^k$ in case $m=2$, where
\begin{equation*}
 \Gamma(\det\nolimits^k)\:=\:\Gamma_2(k)\:=\:\sqrt\pi\cdot\Gamma(k)\Gamma(k-\frac{1}{2})\:.
\end{equation*}
This must equal up to a constant  depending on the normalization of measures and their orientation
\begin{equation*}
 \int_{t_2>t_1>0}(t_1t_2)^{k-\frac{3}{2}}(t_1-t_2)e^{-t_1-t_2}~dt_1~dt_2\:,
 \end{equation*}
which equals
\begin{equation*}
 \int_0^\infty t_2^{k-\frac{3}{2}}e^{-t_2}~dt_2\cdot \int_0^\infty t_2^{k-\frac{1}{2}}e^{-t_1}~dt_1\:-\:
 2\int_0^\infty t_2^{k-\frac{3}{2}}e^{-t_2}\int_{t_2}^\infty t_1^{k-\frac{1}{2}}e^{-t_1}~dt_1~dt_2\:.
\end{equation*}
For the last integral we first notice that by partial integration
\begin{equation*}
 \int_{t_2}^\infty t_1^{k-\frac{1}{2}}e^{-t_1}~dt_1\:=\: t_2^{k-\frac{1}{2}}e^{-t_2}\:+\:(k-\frac{1}{2})\int_{t_2}^\infty t_1^{k-\frac{3}{2}}e^{-t_1}~dt_1\:.
\end{equation*}
Let $\phi(t)$ be an antiderivative of $-t^{k-\frac{3}{2}}e^{-t}$, in particular
\begin{equation*}
 \phi(t_2)\:=\:\int_{t_2}^\infty t_1^{k-\frac{3}{2}}e^{-t_1}~dt_1\:.
\end{equation*}
Accordingly,
\begin{eqnarray*}
\int_{0}^\infty \phi'(t_2)\phi(t_2)~dt_2
 &=& \int_0^\infty t_2^{k-\frac{3}{2}}e^{-t_2}\int_{t_2}^\infty t_1^{k-\frac{3}{2}}e^{-t_1}~dt_1~dt_2\\
  &=& \phi(t_2)^2\mid_0^\infty \:-\:\int_0^\infty \phi(t_2)\phi'(t_2)~dt_2\:,
\end{eqnarray*}
i.e.
\begin{equation*}
 -2\int_0^\infty t_2^{k-\frac{3}{2}}e^{-t_2}\int_{t_2}^\infty t_1^{k-\frac{3}{2}}e^{-t_1}~dt_1~dt_2\:=\: -\Gamma(k-\frac{1}{2})^2\:.
\end{equation*}
So we obtain $\int_{t_2>t_1>0}(t_1t_2)^{k-\frac{3}{2}}(t_1-t_2)e^{-t_1-t_2}~dt_1~dt_2$ to equal
\begin{equation*}
 \Gamma(k+\frac{1}{2})\Gamma(k-\frac{1}{2})-\frac{1}{2^{2k-2}}\Gamma(2k-1)\:-\:(k-\frac{1}{2})\Gamma(k-\frac{1}{2})^2\:,
\end{equation*}
which simplifies to $-\left(\frac{1}{2}\right)^{2k-2}\Gamma(2k-1)$. Using Legendre's relation,
\begin{equation*}
 \frac{\sqrt\pi}{2^{z-1}}\Gamma(z)\:=\:\Gamma(\frac{z}{2})\Gamma(\frac{z+1}{2})
\end{equation*}
we conclude
\begin{equation*}
 \int_{t_2>t_1>0}(t_1t_2)^{k-\frac{3}{2}}(t_1-t_2)e^{-t_1-t_2}~dt_1~dt_2\:=\: -\frac{1}{\sqrt \pi}\cdot\Gamma(k)\Gamma(k-\frac{1}{2})\:.
 \end{equation*}
Thus indeed,
\begin{equation*}
 \Gamma_2(\det\nolimits^k)\:=\: (-1)\cdot \vol(K)\cdot  \int_{t_2>t_1>0}(t_1t_2)^{k-\frac{3}{2}}(t_1-t_2)e^{-t_1-t_2}~dt_1~dt_2
\end{equation*}
with the volume of $K$ normalized by $\vol(K)=\pi$.
\bigskip

We use (\ref{weylintegral}) to compute  the trace 
\begin{equation*}
 \trace(\Gamma(\rho))\:=\: \pm\vol(K)\int_{T_{>0}}\det(t)^{-\frac{m+1}{2}}\prod_{i<j}(t_i-t_j)\trace(\rho(t))e^{-\trace(t)}~dt\:.
\end{equation*}
On the other hand, we can use Weyl's character formula
\begin{equation*}
 \chi_{\rho}\:=\:\frac{\sum_{w\in S_m}\sign(w)e^{w(\lambda+\delta)}}{\prod_{\alpha\in\Phi^+}(e^{\frac{\alpha}{2}}-e^{-\frac{\alpha}{2}})}\:=\:
\frac{\sum_{w\in S_m}\sign(w)e^{w(\lambda+\delta)-\delta}}{\prod_{\alpha\in\Phi^+}(1-e^{-\alpha})}
 \:,
\end{equation*}
where $\delta$ is half the sum of positive roots of $\SO(m)$, to compute $\trace(\rho(t))=\chi_\rho(t)$.
In case the rank $m=2m'\geq 4$ is even, a system $\Phi^+$ of positive roots is given by $e_i-e_j$ and $e_i+e_j$ for $1\leq i<j\leq m$, so  $\delta=\sum_{i}(m-i)e_i$ for $m=2m'$. We obtain
\begin{equation*}
 \trace(\rho(t))\:=\:\chi_\rho(t)\:=\:\frac{\sum_{w\in S_m}\sign(w)t^{w(\lambda+\delta)-\delta}}{\prod_{i<j}(1-\frac{t_j}{t_i})}\:,
\end{equation*}
where for a vector $v=(v_1,\dots,v_m)$ we write $t^v=t_1^{v_1}\cdots t_m^{v_m}$. Thus, in the even rank case we obtain 
 \begin{equation*}
 \trace(\Gamma(\rho))\:=\: \pm\vol(K)\sum_{w\in S_m}\sign(w)\int_{T_{>0}}t^{w(\lambda+\delta)-\frac{m+1}{2}}e^{-\trace(t)}~dt\:.
\end{equation*}
In case the rank $m=2m'+1\geq 3$ is odd, there are the additional positive roots~$e_i$, $i=1,\dots, m$, so 
 $ \delta= \sum_{i}(m+\frac{1}{2}-i)e_i$ and 
\begin{equation*}
\chi_\rho(t)\:=\:\frac{\sum_{w\in S_m}\sign(w)t^{w(\lambda+\delta)-\delta}}{\prod_{i<j}(1-\frac{t_j}{t_i})\cdot \prod_i (1-t_i^{-1})}\:.
\end{equation*}
Thus in the case of odd rank
\begin{equation*}
 \trace(\Gamma(\rho))\:=\: \pm\vol(K)\sum_{w\in S_m}\sign(w)\int_{T_{>0}}\frac{t^{w(\lambda+\delta)-\frac{m}{2}}}{\prod_i(t_i-1)}e^{-\trace(t)}~dt\:.
\end{equation*}


\end{document}